\newcommand{\spec}{{\rm Spec}}
\newcommand{\zar}{{\rm Zar}}
\newcommand{\ms}{\mathscr}
\newcommand{\mc}{\mathcal}
\newcommand{\ad}{{\rm Ad}}
\newcommand{\f}{\mathfrak}
\newcommand{\clop}{{\rm Clop}}
\newcommand{\z}{{\ldots}}
\newcommand{\w}{{\setminus}}
\newtheoremstyle{break1}
  {9pt}
  {9pt}
  {}
  {}
  {\textbf}
  {.}
  {.7em}
  {}
\newtheorem{thm}{\textbf{Theorem}}[section]
\newtheorem{defn}[thm]{\textbf{Definition}}
\newtheorem{cor}[thm]{\textbf{Corollary}}
\newtheorem{lem}[thm]{\textbf{Lemma}}
\newtheorem{prop}[thm]{\textbf{Proposition}}
\theoremstyle{definition}
\newtheorem{ex}[thm]{ \textbf{Example}}
\newtheorem{oss}[thm]{Remark}
\theoremstyle{remark}
\begin{document}

\title[]
{Spectral spaces and ultrafilters}
\author{Carmelo Antonio Finocchiaro}
\maketitle
\begin{flushright}
\scriptsize \it In memory of my father
\end{flushright}
\begin{abstract}
Let $X$ be the prime spectrum of a ring. In \cite{folo} the authors define a topology on $X$ by using ultrafilters and they show that this topology is precisely the constructible topology. In this paper we generalize the construction given in \cite{folo} and, starting from a set $X$ and a collection of subsets $\mathcal{F}$ of $X$, we define by using ultrafilters a topology on $X$ in which $\mathcal F$ is a collection of clopen sets. We use this construction for giving a new characterization of spectral spaces and several examples of spectral spaces. 
\end{abstract}

\section*{Introduction}
Let $A$ be a ring (with this term we  will always mean a commutative ring with multiplicative identity) and let $\spec(A)$ be the prime spectrum of $A$, i.e., the set of all prime ideals of $A$. As it is well known, $\spec(A)$ has a natural structure of topological space, whose closed sets are the sets of the form $V(\f a):=\{\f p\in \spec(A):\f p\supseteq \f a\}$, where $\f a$ runs in the collection of all ideals of $A$. The topology on $\spec(A)$ obtained in this way is the so called \emph{Zariski topology}. A topological space is called \emph{spectral} if it is homeomorphic to $\spec(A)$ (endowed with the Zariski topology), for some ring $A$. In \cite{ho} the author gives a characterization  of spectral spaces. More precisely, he shows that a topological space is spectral if and only if it is compact (i.e., every open cover has a finite subcover), it has a basis of open and compact subspaces that is closed under finite intersections, and every irreducible closed subset has a unique generic point (i.e., it is the closure of a unique point).  

Thus the Zariski topology on the prime spectrum of a ring $A$ is always  T$_0$, but it is almost never Haudorff. More precisely, it satisfies the Hausdorff axiom if and only if the Krull dimension of $A$ is 0. Thus, the following question arises naturally. How can the Zariski topology be refined in order to make $\spec(A)$ an Hausdorff space without losing compactness? The following classical construction (see \cite[Chapter 3, Exercises 27--30]{AM} and \cite{ho}) answers completely this question: consider the set $\spec(A)$ and let $\mathcal P:=\{D_f:=\spec(A)\setminus V(fA):f\in A\}$ denote the collection of the so called \emph{principal open subsets} of $\spec(A)$ (this is clearly a basis of the Zariski topology). Consider on $\spec(A)$ the coarsest topology for which $\mathcal P$ is a collection of clopen sets. This new topology is known as \emph{the constructible} (or  \emph{patch}) topology on $\spec(A)$, and it makes $\spec(A)$ a compact and Hausdorff space. The reason of the name \emph{constructible} is mainly historical: indeeed, when $\spec(A)$, with the Zariski topology, is a Noetherian space, then the clopen subsets of the constructible topology are precisely the \emph{constructible subsets}, in the sense of Chevalley (i.e., are finite unions of locally closed subsets of $\spec(A)$). 

In a very recent paper by Fontana and Loper (see \cite{folo}) it is shown that the constructible topology is identical to ``another'' topology, defined by using ultrafilters. More precisely, let $Y$ be a subset of $\spec(A)$ and let $\ms U$ be an ultrafilter on $Y$. According to \cite[Lemma 2.4]{calota}, the subset $\f p_{Y,\ms U}:=\{x\in A:V(x)\cap Y\in \ms U\}$ of $A$ is a prime ideal of $A$, called \emph{the ultrafilter limit point of $Y$, with respect to $\ms U$}. Ultrafilter limit points of a subset $Y$ of $\spec(A)$ are not always elements of $Y$: for example, if $A$ is the ring of integers, $Y:={\rm Max}(A)$ and  $\ms U$ is a nontrivial ultrafilter on $Y$ (i.e., an ultrafilter whose elements are infinite sets), then it is very easy to check that $\f p_{Y,\ms U}=(0)$. Thus the following definition is natural: define a subset $Y$ of $\spec(A)$ to be \emph{closed under ultrafilters} if it contains all its ultrafilter limit points, with respect each ultrafilter on $Y$. Then, in \cite{folo} it is proved that the ultrafilter closed sets of $\spec(A)$ are the closed sets for a topology on $\spec(A)$, called \emph{the ultrafilter topology}, and it is shown that the  constructible topology and the ultrafilter topology are the same topology. Recently in \cite{fifolo1} and \cite{fifolo2} the authors gave  several applications of the constructible topology on the spectral space $\zar(K|A)$ of all the valuation domains of a field $K$ containing a fixed subring $A$ of $K$. Other important contributions on the algebraic applications of the topological properties of the Riemann--Zariski space $\zar(K|A)$ were given for instance in \cite{ol} and \cite{ol1}.

Inspired by the idea given in \cite{folo}, the first goal of this paper is to define the ultrafilter topology in a more general setting. Let $X$ be a set and $\mathcal F$ be a nonempty collection of subsets of $X$. To any subset $Y$ of $X$ and any ultrafilter $\ms U$ on $Y$, we can associate a  set, namely $Y_{(X,\mathcal F)}(\ms U)$, whose points are the points $x$ of $X$ such that, for any member $F\in\mathcal F$, we have $x\in F$ if and only if $F\cap Y\in \ms U$. As the reader will see in Example \ref{gioco}(\ref{giocospec}), in this general setting $X$ plays the role of the space of prime ideals $\spec(A)$ of a ring $A$, $\mathcal F$ the role of the collection $\mathcal P$ of the principal open subsets of $\spec(A)$, and $Y_{(X,\mathcal F)}(\ms U)$ the role of the ultrafilter limit point $\f p_{Y,\ms U}$ of $Y$, with respect to $\ms U$. Then we  define the notion of $\mathcal F-$stable under ultrafilters subset of $X$ (see Definition \ref{stable}) and we show that the sets that are $\mathcal F-$stable under ultrafilters are the closed sets for a topology. Thus, given a set $X$ and a collection of subsets $\mathcal F$ of $X$, we can define a new topology, and we call it the $\mathcal F-$ultrafilter topology. After several examples, we  discuss the main properties of this kind of topology. For example, we  show that in this topology $\mathcal F$ is always a collection of clopen sets. Also we describe a closure of a set and characterize when the $\mathcal F-$ultrafilter topology is compact. From this we deduce the equality of the constructible topology and the ultrafilter topology in the particular case in which $X$ is the prime spectrum of a ring (\cite[Theorem 8]{folo}). In Section 3 we give an application of the general theory developed before. More precisely, we study the case in which $(X, \mathcal T)$ is a topological space and $\mathcal F$ is a basis of this topology. In Proposition \ref{B-ultra} we describe how the original topology $\mathcal T$ and the $\mathcal F-$ultrafilter topology are related. In Corollary \ref{spectralcriterion} we deduce a new criterion, based on the use of ultrafilters, to decide when a topological space is spectral. In Propositions \ref{overrings} and \ref{icoverrings} we apply it to give new examples of spectral spaces. Finally, in Theorem \ref{scheme-constructible} we show that the constructible topology defined by Grothendiek (see \cite[pag. 337, (7.2.11)]{EGA}) on the underlying topological space of a scheme can be seen as a particular case of the construction introduced in Section 2. 
\section{Notation and preliminaries}
We begin by giving some notation and preliminary results.  If $X$ is a set, the collection of all the subsets (resp., finite subsets) of $X$ will be denoted by $\mc B(X)$ (resp., $\mc B_{\tt fin}(X)$). If $\mc F\subseteq \mc B(X)\setminus\{\emptyset\}$, we shall denote by $\bigcap \mc F$ (resp., $\bigcup \mc F$)  the intersection (resp., the union) of all  members of $\mc F$. If $X$ is a topological space, then we will denote by $\clop(X)$ the collection of all the clopen subsets of $X$. If $Y$ is a subset of $X$, we will denote by $\ad(Y)$ the closure of $Y$.

Recall that given a set $X$ and a nonempty collection $\ms F$ of subsets of $X$, we say
 that $\ms F$ is \emph{a filter on} $X$ if the following properties hold:
\begin{enumerate}[(i)]
\item $\emptyset\notin \ms F$. 
\item If $Y,Z\in \ms F$, then $Y\cap Z\in \ms  F$. 
\item If $Z\in \ms F$ and $Z\subseteq Y\subseteq X$, then $Y\in \ms F$.
\end{enumerate}
A filter  $\ms F$ is \emph{an ultrafilter on} $X$ if $\ms F$ is a maximal element (under inclusion) in the set of all filters on $X$. We shall denote by $\beta X$ the set of all the ultrafilters on $X$. \\\\
Now we state without proof some easy and well known properties of ultrafilters on sets (see \cite{je}). 
\begin{oss}\label{beginning}
Let $X$ be a set.
\begin{enumerate}[\rm (i)]
\item If $x\in X$, then the collection of sets 
$${\beta_X}^x:=\{Y\subseteq X:x\in Y\}$$ 
is an ultrafilter on $X$, and it is called {\it the trivial ultrafilter generated by }$x$.
\item\label{ultracaratt} If  $\ms F$ is a filter on  $X$, the following conditions are equivalent:
\begin{enumerate}[\rm (a)]
\item $\ms F$ is an ultrafilter on $X$.
\item For each subset $Y$ of $X$, then $Y\notin \ms F$ implies $X\setminus Y\in \ms F$. 
\item If $Y$ and $Y_0$ are subsets of $X$ such that $Y\cup Y_0\in \ms F$, then $Y\in \ms F$ or $Y_0\in \ms F$. 
\end{enumerate}
\item\label{zornultra} If $\ms F$ is a collection of subsets of $X$ with finite intersection property, then there exists an ultrafilter $\ms U$ on $X$ containing $\ms F$ (by a straightforward application of Zorn's Lemma).
\item\label{giu} If $\ms U$ is an ultrafilter on $X$ and $Y\in \ms U$, then 
$$
\ms U_{ Y}:=\{Y\cap U:U\in\ms U\}
$$
is an ultrafilter on $Y$ contained in $\ms U$.
\item\label{su}  If $f:X\longrightarrow Y$ is a function and $\ms U$ is an ultrafilter on $X$, then $\ms U^f:=\{T\subseteq Y:f^{-1}(T)\in\ms U\}$ is an ultrafilter on $Y$.\\
In particular, If $Z\subseteq Y\subseteq X$ and $\ms U$ is an ultrafilter on $Z$, then 
$$
\ms U^{ Y}:=\{T\subseteq Y:T\cap Z\in \ms U\}
$$
is an ultrafilter on $Y$ containing $\ms U$. In fact, if $\iota:X\longrightarrow Y$ is the inclusion, then $\ms U^Y=\ms U^\iota$.
\end{enumerate} 
\end{oss}
\section{The construction}
 Let $X$ be a set and $\mc F$ be a nonempty collection of subsets of $X$. For each $Y \subseteq X$ and each ultrafilter $\ms U$ on  $Y$, we define
$$ 
Y_{(X,\mathcal F)}(\ms U):=Y_{\mc F}(\ms U):=\{x\in X:[\forall F\in \mc F, x\in F\Longleftrightarrow F\cap Y\in \ms U]\}.
$$
Since $\mc F$ will be almost always a fixed collection of subsets of a fixed set $X$, we will denote the set $Y_{(X,\mc F)}(\ms U)$ simply by $Y(\ms U)$, when no confusion can arise.
\begin{ex}\label{gioco}
 Let $X$ be a set, $\mc F$ be a nonempty collection of subsets of $X$ and $Y$ be a subset of $X$. 
\begin{enumerate}[(1)]
\item If  $y\in Y$ and $\beta^y_Y$ is, as usual, the trivial ultrafilter on $Y$ generated by $y$, then $y\in Y_{\mc F}(\beta^y_Y)$. 
\item\label{giocospec} Let $A$ be a ring, $Y$ be a subset of $\spec(A)$ and $\ms U$ be an ultrafilter on $Y$. Set, as before, 
$$
\f p_{\ms U}:=\{x\in A:V(x)\cap Y\in \ms U\}.
$$
Then, if $\mc P:=\{D_a:a\in A\}$ is the collection of all the principal open subsets of $\spec(A)$, the equality $Y_{\mc P}(\ms U)=\{\f p_{\ms U}\}$ holds. As a matter of fact, fix $\f p\in \spec(A)$. Then, by definition, $\f p\in Y_{\mc P}(\ms U)$ if and only if the following statement is true:
$$
\mbox{ for any }a\in A, \f p\in D_a \Longleftrightarrow D_a\cap Y\in \ms U. 
$$
Obviously, the previous statement can be written in the following (equivalent) way:
$$
\mbox{ for any }a\in A, a\in \f p \Longleftrightarrow V(a)\cap Y\in \ms U. 
$$
Now it follows immediately that $\f p\in Y_{\mc P}(\ms U)$ if and only if $\f p=\f p_{\ms U}$. 
\item\label{giocozar} Let $K$ be a field and $A$ be a subring of $K$. Denote by $\zar(K|A)$ the set of all the valuation domains of $K$ containing $A$ as a subring. Let $\mathcal Q:=\{B_F:=\zar(K|A[F]):F\in \mc B_{\tt fin}(K)\}$ be the natural basis of open sets of the Zariski topology of $\zar(K|A)$. If $Y$ is a subset of $\zar(K|A)$ and $\ms U$ is an ultrafilter on $Y$, set
$$
A_{\ms U}:=\{x\in K:B_{\{x\}}\cap Y\in \ms U \}
$$
Then, as in (2), we have $Y_{\mc Q}(\ms U)=\{A_{\ms U}\}$. 
\end{enumerate}
\end{ex}
\begin{oss}
Let $X$ be a set and $\mc F$ be a nonempty collection of subsets of $X$ that is closed under complements. Then, for any subset $Y$ of $X$ and any ultrafilter $\ms U$ on $Y$, we have
$$
Y_{\mc F}(\ms U)=\bigcap \{F\in \mc F: F\cap Y\in\ms U\}.
$$
The inclusion $\subseteq$ is clear, by  definition. Conversely, take an element $x\in \bigcap \{F\in\mc F:F\cap Y\in \ms U\}$, and fix a set $F\in \mc F$. If $Y\cap F\in\ms U$, then $x\in F$, by assumption. Conversely, suppose that $x\in F$. If $Y\cap F\notin\ms U$, then $Y\w F=(X\w F)\cap Y\in \ms U$, since $\ms U$ is an ultrafilter on $Y$. Also, by assumption, $X\w F\in \mc F$ and thus $x\in X\w F$, being $x$ an element of the intersection, and this is a contradiction.  
\end{oss}
\begin{defn}\label{stable}
Let $X$ be a set and $\mc F$ be a nonempty collection of subsets of $X$. Then, we say that a subset $Y$ of $X$ is {\rm $\mc F-$stable under ultrafilters} if $Y(\ms U)\subseteq Y$, for each ultrafilter $\ms U$ on $Y$. 
\end{defn}
\begin{ex} Let $A$ be a ring (resp. $K$ be a field and $S$ be a subring of $K$). Keeping in mind the notation and the statements  given in Example \ref{gioco} it follows immediately that a subset $Y$ of $\spec(A)$ (resp. $\zar(K|A)$) is $\mc P-$stable under ultrafilters (resp. $\mc Q-$stable under ultrafilters) if and only if it is closed in the ultrafilter topology of $\spec(A)$ (resp. $\zar(K|A)$). 
\end{ex}
The following easy and technical lemma will allow us to show that the ultrafilter topology is a very particular case of a more general construction. 
\begin{lem}\label{technical}
Let $X$ be a set, $\mc F$ be a given nonempty collection of subsets of $X$ and $ Y\subseteq Z\subseteq X$. Let  $\ms U$ be an ultrafilter on $Y$, $T\in \ms U$ and, as in Remark  \ref{beginning}(iv,v), set
$$
\ms U_T:=\{U\cap T:U\in\ms U\}\qquad \ms U^Z:=\{Z'\subseteq Z:Z'\cap Y\in \ms U\}
$$
Then we have 
$$ 
Y(\ms U)=T(\ms U_T)=Z(\ms U^Z).
$$
\end{lem}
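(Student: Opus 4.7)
The plan is to reduce everything to a single set of elementary equivalences at the level of the defining condition ``$F \cap (\cdot) \in (\cdot)$''. Specifically, the definition of $Y(\ms U)$ reads a membership pattern of $x$ relative to $\ms F$ against the pattern ``$F \cap Y \in \ms U$''. So it suffices to show that for every $F \in \mc F$,
\[
F \cap Y \in \ms U \; \Longleftrightarrow \; F \cap T \in \ms U_T \; \Longleftrightarrow \; F \cap Z \in \ms U^Z.
\]
Once these two biconditionals are proved, the three sets $Y(\ms U)$, $T(\ms U_T)$, $Z(\ms U^Z)$ coincide, since each is by definition the set of $x \in X$ whose membership in each $F \in \mc F$ is controlled by one of these (provably equivalent) conditions.

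For the equivalence with $\ms U^Z$, the computation is immediate from the definition: $F \cap Z \in \ms U^Z$ means $(F \cap Z) \cap Y \in \ms U$, and since $Y \subseteq Z$, this is just $F \cap Y \in \ms U$. No filter-theoretic subtlety is needed.

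For the equivalence with $\ms U_T$, I would first observe that since $T \in \ms U$ and filters are closed under finite intersection, $\ms U_T = \{U \cap T : U \in \ms U\}$ simplifies to $\{S \subseteq T : S \in \ms U\}$: the inclusion $\subseteq$ uses $T \in \ms U$ to see $U \cap T \in \ms U$, and the inclusion $\supseteq$ is trivial by taking the representative $S = S \cap T$. Note also that $T \subseteq Y$ since $T \in \ms U$ and $\ms U$ is an ultrafilter on $Y$. Then $F \cap T \in \ms U_T$ iff $F \cap T \in \ms U$, which is equivalent to $F \cap Y \in \ms U$: one direction by upward closure ($F \cap T \subseteq F \cap Y$), the other by intersecting $F \cap Y \in \ms U$ with $T \in \ms U$ to get $(F \cap Y) \cap T = F \cap T \in \ms U$.

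There is no real obstacle here; the only thing to be careful about is bookkeeping with the three ambient sets $Y \subseteq Z \subseteq X$ and remembering that $T \subseteq Y$ comes for free from $T \in \ms U$. Once the simplification $\ms U_T = \{S \subseteq T : S \in \ms U\}$ is in place, the rest is just unwinding definitions.
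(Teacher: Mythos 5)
Your proof is correct and takes essentially the same approach as the paper: both reduce the claim to checking that, for each $F\in\mc F$, the conditions $F\cap Y\in\ms U$, $F\cap T\in\ms U_T$ and $F\cap Z\in\ms U^Z$ are equivalent, using $T\subseteq Y\subseteq Z$, $T\in\ms U$, and the filter axioms. Your version is marginally tidier in that it proves the biconditionals of the defining conditions once and deduces all equalities simultaneously, whereas the paper writes out one inclusion and declares the rest analogous.
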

\begin{proof} We shall prove only the inclusion $Y(\ms U)\subseteq T(\ms U_T)$. The others are shown with the same straightforward arguments. Let $x\in Y(\ms U)$ and $F\in \mc F$. We need to show that $x\in F$ if and only if $F\cap T\in \ms U_T$. Assume $x\in \mc F$. Then, $F\cap Y\in \ms U$ and $F\cap T=(F\cap Y)\cap T\in \ms U_T$, by definition. Assume $F\cap T\in \ms U_T$. Since $\ms U_T\subseteq \ms U$ and $F\cap T\subseteq F\cap Y$, then $F\cap Y\in \ms U$ and thus $x\in F$. 
\end{proof}
\begin{prop}\label{F-ultratop}
Let $X$ be a set and $\mc F$ be a nonempty collection of subsets of $X$. Then, the family of all the subsets of $X$ that are $\mc F-$stable under ultrafilters is the collection of the closed sets for a topology on $X$. We will call it {\rm the $\mc F-$ultrafilter topology on $X$}, and denote by $X^{^{\mc F-{\rm ultra}}}$ the set $X$ endowed with the $\mc F-$ultrafilter topology.
\end{prop}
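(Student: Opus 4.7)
The plan is to verify that the family of $\mc F-$stable under ultrafilters subsets satisfies the three axioms for closed sets of a topology: containing $\emptyset$ and $X$, being closed under finite unions, and being closed under arbitrary intersections. The trivial cases dispose of $\emptyset$ (vacuously, since there are no ultrafilters on the empty set) and of $X$ (since $X(\ms U)\subseteq X$ by the very definition of $X_{\mc F}(\ms U)$). The substantive content, for both the union and intersection axioms, will be an application of Lemma \ref{technical}, which ensures that the set $Y(\ms U)$ depends only on $\ms U$ and not on which ambient (sub- or super-) set we regard the ultrafilter as living on.

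For finite unions, it suffices to treat two $\mc F-$stable sets $Y_1, Y_2$. Given an ultrafilter $\ms U$ on $Y_1\cup Y_2$, the ultrafilter property (Remark \ref{beginning}(\ref{ultracaratt})) forces $Y_1\in\ms U$ or $Y_2\in\ms U$; say $Y_1\in\ms U$. Then $\ms U_{Y_1}$ is an ultrafilter on $Y_1$, and by Lemma \ref{technical} applied with $T=Y_1$ we have $(Y_1\cup Y_2)(\ms U)=Y_1(\ms U_{Y_1})$. The $\mc F-$stability of $Y_1$ yields $Y_1(\ms U_{Y_1})\subseteq Y_1\subseteq Y_1\cup Y_2$, as required.

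For arbitrary intersections, let $\{Y_\lambda\}_{\lambda\in\Lambda}$ be a family of $\mc F-$stable subsets of $X$, let $Y:=\bigcap_{\lambda\in\Lambda}Y_\lambda$, and let $\ms U$ be an ultrafilter on $Y$. For each $\lambda$, the inclusion $Y\subseteq Y_\lambda$ permits us to form the ultrafilter $\ms U^{Y_\lambda}$ on $Y_\lambda$, and Lemma \ref{technical} (applied to $Y\subseteq Y_\lambda$) gives $Y(\ms U)=Y_\lambda(\ms U^{Y_\lambda})$. Since $Y_\lambda$ is $\mc F-$stable, $Y_\lambda(\ms U^{Y_\lambda})\subseteq Y_\lambda$; because this holds for every $\lambda$, we conclude $Y(\ms U)\subseteq\bigcap_\lambda Y_\lambda=Y$.

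There is no real obstacle here: the entire result is essentially a bookkeeping consequence of Lemma \ref{technical} together with the ``either $A$ or its complement lies in an ultrafilter'' property. The only step requiring any thought is the union axiom, where one must remember to use the ultrafilter dichotomy to reduce from $Y_1\cup Y_2$ to one of its two pieces before invoking the lemma.
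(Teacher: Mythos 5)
Your proof is correct and follows essentially the same route as the paper's: the union axiom is handled by the ultrafilter dichotomy plus Lemma \ref{technical} with $T=Y_1$, and the intersection axiom by Lemma \ref{technical} applied to $Y\subseteq Y_\lambda$, exactly as in the published argument. The only difference is that you also spell out the trivial cases of $\emptyset$ and $X$, which the paper leaves implicit.
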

\begin{proof}
Let $C,C_0$ be  $\mc F-$stable under ultrafilters subsets of $X$, and $\ms U$ be an ultrafilter on $Y:=C\cup C_0$. By Remark \ref{beginning}(ii), we can assume that $C\in \ms U$. Then, by hypothesis and Lemma \ref{technical}, we have $Y(\ms U)=C(\ms U_C)\subseteq C\subseteq Y$, and thus $Y$ is $\mc F-$stable under ultrafilters. 

Now, let $\mc G$ be a collection of  $\mc F-$stable  under ultrafilters subsets of $X$ and let $\ms U$  be an ultrafilter on $Z:=\bigcap \mc G$. For each $C\in \mc G$, we have $C(\ms U^C)=Z(\ms U)$ (by Lemma \ref{technical}), and thus $Z(\ms U)\subseteq Z$. This completes the proof.
\end{proof}
\begin{oss}\label{esempi1}
Let $X$ be a set.
\begin{enumerate}[\rm (1)]
\item The $\mc B(X)-$ultrafilter topology on $X$ is the discrete topology on $X$.
\item The $\{X\}-$ultrafilter topology on $X$ is the chaotic topology.
\item \label{exspec} Let $A$ be a ring and $\mathcal P$ be the collection of all the principal open subsets of $X:=\spec(A)$. Then, the $\mathcal P-$ultrafilter topology of $X$ is equal to the ultrafilter topology studied in \cite{folo}.
\item\label{exzar} Let $K$ be a field, $A$ be a subring of $K$ and $\mathcal Q$ be the natural basis of open sets for the Zariski topology on $\zar(K|A)$. Then, the $\mc Q-$ultrafilter topology is equal to the ultrafilter topology on $\zar(K|A)$. 
\item\label{finer} If $\mc F\subseteq \mc G$ are collections of subsets of $X$, then the $\mc G-$ultrafilter topology is finer than or equal to the $\mc F-$ultrafilter topology. In fact, for each subset $Y$ of $X$ and each ultrafilter $\ms U$ on $Y$, we have $Y_{\mc G}(\ms U)\subseteq Y_{\mc F}(\ms U)$. 
\end{enumerate}
\end{oss}
\begin{prop}\label{prebool}
Let $X$ be a set and $\mc F$ be a nonempty collection of subsets of $X$. Set 
$$\mc F_\sharp:=\left\{\bigcap\mc  G:\mc G\in\mc B_{\tt fin}(\mc F)\right\} \quad 
\mc F^\sharp:=\left\{\bigcup \mc G:\mc G\in\mc B_{\tt fin}(\mc F)\right\}
$$
$$
 \mc F^-:=\{X\w F:F\in \mc F\} .
$$
Then, the $\mc F-$ultrafilter topology, the $\mc F_{\sharp}-$ultrafilter topology and the $\mc F^\sharp-$ultrafilter topology are the same.
\end{prop}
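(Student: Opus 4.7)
The plan is to prove the stronger pointwise statement that for every subset $Y\subseteq X$ and every ultrafilter $\ms U$ on $Y$, the three sets
\[
Y_{\mathcal F}(\ms U),\qquad Y_{\mathcal F_\sharp}(\ms U),\qquad Y_{\mathcal F^\sharp}(\ms U)
\]
all coincide. Once this is established, the notion of being $\mathcal F$-, $\mathcal F_\sharp$-, and $\mathcal F^\sharp$-stable under ultrafilters become literally the same condition, so by Proposition \ref{F-ultratop} the three topologies share the same closed sets.

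One direction of each equality is essentially free: taking singleton finite subfamilies shows $\mathcal F\subseteq \mathcal F_\sharp$ and $\mathcal F\subseteq \mathcal F^\sharp$, so by Remark \ref{esempi1}(\ref{finer}) we already have $Y_{\mathcal F_\sharp}(\ms U)\subseteq Y_{\mathcal F}(\ms U)$ and $Y_{\mathcal F^\sharp}(\ms U)\subseteq Y_{\mathcal F}(\ms U)$. The reverse inclusions are where the argument has to do something. For $Y_{\mathcal F}(\ms U)\subseteq Y_{\mathcal F_\sharp}(\ms U)$, I would take $x\in Y_{\mathcal F}(\ms U)$ and a generic element $G=F_1\cap\cdots\cap F_n\in \mathcal F_\sharp$, and verify the two implications of the defining equivalence $x\in G\iff G\cap Y\in\ms U$ using only the filter axioms: if $x\in G$ then each $F_i\cap Y\in\ms U$ and hence $G\cap Y=\bigcap_i(F_i\cap Y)\in\ms U$ by finite intersections; conversely, if $G\cap Y\in\ms U$, then each $F_i\cap Y\supseteq G\cap Y$ belongs to $\ms U$ by upward closure, so $x\in F_i$ for all $i$, i.e.\ $x\in G$.

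For the inclusion $Y_{\mathcal F}(\ms U)\subseteq Y_{\mathcal F^\sharp}(\ms U)$ the same bookkeeping is used but the direction that requires the ultrafilter property (as opposed to a mere filter) reverses: taking $G=F_1\cup\cdots\cup F_n\in\mathcal F^\sharp$ and $x\in Y_{\mathcal F}(\ms U)$, the implication ``$x\in G\Rightarrow G\cap Y\in\ms U$'' follows from upward closure, while the converse ``$G\cap Y\in\ms U\Rightarrow x\in G$'' needs Remark \ref{beginning}(\ref{ultracaratt})(c) applied inductively: from $\bigcup_i(F_i\cap Y)\in\ms U$ one extracts some index $i$ with $F_i\cap Y\in\ms U$, whence $x\in F_i\subseteq G$. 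This is the only step where the full ultrafilter hypothesis (rather than just the filter axioms) is essential, and it is the only mild subtlety in an otherwise routine verification.
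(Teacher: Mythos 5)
Your proof is correct and takes essentially the same route as the paper: both reduce the claim to the pointwise identity $Y_{\mathcal F}(\ms U)=Y_{\mathcal F_\sharp}(\ms U)=Y_{\mathcal F^\sharp}(\ms U)$, getting one inclusion from $\mathcal F\subseteq\mathcal F_\sharp,\mathcal F^\sharp$ via Remark \ref{esempi1}(\ref{finer}) and the other by verifying the defining equivalence on finite intersections and unions. The paper writes out only the intersection case and dismisses the union case as ``a similar argument''; your explicit treatment of that case, isolating the use of Remark \ref{beginning}(\ref{ultracaratt})(c) as the one place the ultrafilter (rather than mere filter) hypothesis is needed, is exactly the omitted step.
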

\begin{proof}
By  Remark \ref{esempi1}(\ref{finer}) and the obvious inclusion $\mc F\subseteq \mc F^\sharp$, it is enough to show that the $\mc F^\sharp-$ultrafilter topology is finer or equal than the $\mc F-$ultrafilter topology. Let $Y$ be an $\mc F^\sharp-$stable under ultrafilter subset of $X$, $\ms U$ be an ultrafilter on $Y$, $x\in Y_{\mc F}(\ms U)$, $\mc G:=\{F_1,\z,F_n\}\in\mc B_{\tt fin}(\mc F)$ and $G:=\bigcap \mc G$. We want to show that $x\in G$ if and only if $G\cap Y\in \ms U$. If $x\in G$, then $F_i\cap Y\in \ms U$, for $i=1,\z, n$, and thus $G\cap Y\in \ms U$. Since $G\cap Y\subseteq F_i\cap Y$, for each $i=1,\z, n$, if $G\cap Y\in \ms U$, then it follows immediately that $F_i\cap Y\in\ms U$, for $i=1,\z,n$, and thus $x\in G$, by definition. This proves that $Y_{\mc F}(\ms U)\subseteq Y_{\mc F^\sharp}(\ms U)$. Thus it is clear that the $\mc F-$ultrafilter topology and the $\mc F_\sharp-$ultrafilter topology are the same. By a similar argument it can be shown that $Y_{\mc F}(\ms U)=Y_{\mc F^\sharp}(\ms U)=Y_{\mc F^-}(\ms U) $. Thus the proof is complete.
\end{proof}
\begin{cor}\label{bool}
Let $X$ be a set, $\mathcal F$ be a nonempty collection of subsets of $X$ and ${\rm Bool}(\mc F)$ be the boolean subalgebra of $\mc B(X)$ generated by $\mc F$. Then the $\mc F-$ultrafilter topology and the ${\rm Bool}(\mc F)-$ultrafilter topology are the same. 
\end{cor}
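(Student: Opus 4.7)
The proof is by a direct iteration of Proposition \ref{prebool}, exploiting the fact that every Boolean combination of elements of $\mc F$ admits a disjunctive normal form, that is, can be written as a finite union of finite intersections of elements of $\mc F \cup \mc F^-$. The plan is to establish, for every $Y \subseteq X$ and every ultrafilter $\ms U$ on $Y$, the equality
\[
Y_{\mc F}(\ms U) = Y_{{\rm Bool}(\mc F)}(\ms U),
\]
which immediately implies that the two notions of stability under ultrafilters, and hence the two topologies, coincide. The inclusion ``$\supseteq$'' is automatic from Remark \ref{esempi1}(\ref{finer}), since $\mc F \subseteq {\rm Bool}(\mc F)$.

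For the nontrivial inclusion ``$\subseteq$'', I would introduce the families $\mc F_1 := \mc F \cup \mc F^-$, $\mc F_2 := (\mc F_1)_\sharp$, and $\mc F_3 := (\mc F_2)^\sharp$, and show by three successive applications of Proposition \ref{prebool} (together with the auxiliary equality $Y_{\mc F}(\ms U) = Y_{\mc F^-}(\ms U)$ proved inside it, which combined with intersecting the two defining conditions handles the step $\mc F \rightsquigarrow \mc F_1$) that
\[
Y_{\mc F}(\ms U) = Y_{\mc F_1}(\ms U) = Y_{\mc F_2}(\ms U) = Y_{\mc F_3}(\ms U).
\]
The remaining task is to verify that $\mc F_3$ already contains the whole Boolean subalgebra ${\rm Bool}(\mc F)$, with the harmless exception of $\emptyset$ and $X$, which impose only trivially satisfied conditions on ultrafilter limits. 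This reduces to checking that $\mc F_3$ is closed under finite union, finite intersection, and complement; the first two are immediate from the definition, and the third comes from the symmetry $(\mc F_1)^- = \mc F_1$ combined with De Morgan's laws and distributivity. Once this is known, one more application of Remark \ref{esempi1}(\ref{finer}) gives $Y_{\mc F_3}(\ms U) \subseteq Y_{{\rm Bool}(\mc F)}(\ms U)$, which closes the chain.

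The only mildly technical point is the last one: showing that the complement of a DNF expression over $\mc F \cup \mc F^-$ is again in DNF. This is a standard exercise in propositional logic (write the complement via De Morgan as a CNF, then distribute back to a DNF), so it should pose no conceptual difficulty; the substantive work has already been carried out in Proposition \ref{prebool}.
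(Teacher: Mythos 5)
Your proposal is correct and follows essentially the same route as the paper: reduce to $\mc F\cup\mc F^-$ via the equality $Y_{\mc F}(\ms U)=Y_{\mc F^-}(\ms U)$ extracted from the proof of Proposition \ref{prebool}, then pass to $\left(\left(\mc F\cup\mc F^-\right)_\sharp\right)^\sharp$ by Proposition \ref{prebool}, using that this family is (up to the trivial sets) all of ${\rm Bool}(\mc F)$. The paper simply asserts this last disjunctive-normal-form identity as obvious, whereas you sketch its verification via De Morgan and distributivity; otherwise the two arguments are identical.
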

\begin{proof}
Let $Y$ be a nonempty subset of $X$ and $\ms U$ be an ultrafilter on $Y$. Keeping in mind the proof of Proposition \ref{prebool}, it follows that $Y_{\mc F}(\ms U)=Y_{\mc F\cup \mc F^-}(\ms U)$. Thus the $\mc F-$ultrafilter topology and the $(\mc F\cup \mc F^-)-$ultrafilter topology are the same. Since, obviously, ${\rm Bool}(\mc F)=\left(\left(\mc F\cup  \mc F^-\right)_\sharp\right)^\sharp$, the statement follows by using Proposition \ref{prebool}.
\end{proof}
\begin{ex}\label{opencompact}
Preserve the notation given in Remark \ref{esempi1}(\ref{exspec}), and let $\mathcal K$ be the collection of all the open and compact subspace of $X:=\spec(A)$. Since every element of $\mathcal K$ is a finite union of members of $\mathcal P$, it follows by Corollary \ref{bool} that the $\mathcal P-$ultrafilter topology and the $\mathcal K-$ultrafilter topology on $X$ are the same. 
\end{ex}
\begin{prop}\label{Fclopen}
Let $X$ be a set and $\mc F$ be a nonempty collection of subsets of $X$. Then, the following statements hold.
\begin{enumerate}[\rm (1)]
\item\label{Fclopen1} ${\rm Bool}(\mc F)\subseteq \clop(X^{^{\mc F-{\rm ultra}}})$.
\item\label{FclopenHausdorff} If, for each pair of distinct points $x,y\in X$ there exists a set $F\in \mc F$ such that $x\in F$ and $y\notin F$, then $X^{^{\mc F-{\rm ultra}}}$ is an Hausdorff and totally disconnected  space, and $Y(\ms U)$ has at most an element, for each $Y\subseteq X$ and $\ms U\in \beta Y$. 
\end{enumerate}
\end{prop}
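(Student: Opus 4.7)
My plan is to deduce (\ref{Fclopen1}) by showing that each element of $\mc F$ is already clopen in $X^{^{\mc F-{\rm ultra}}}$; since $\clop$ of any topological space is automatically a boolean subalgebra of $\mc B(X)$, this will yield the stronger statement ${\rm Bool}(\mc F) \subseteq \clop(X^{^{\mc F-{\rm ultra}}})$ at no additional cost. To see that a given $F \in \mc F$ is closed, I would take any ultrafilter $\ms U$ on $F$ and any $x \in F(\ms U)$, and apply the defining biconditional at the distinguished set $F$ itself: since $F \cap F = F \in \ms U$, the biconditional forces $x \in F$. The dual argument shows $X \w F$ is closed: for an ultrafilter $\ms U$ on $X \w F$ and $x \in (X \w F)(\ms U)$, testing the biconditional at $F \in \mc F$ yields $x \in F \Longleftrightarrow F \cap (X \w F) = \emptyset \in \ms U$, which fails, so $x \in X \w F$.

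For (\ref{FclopenHausdorff}), I would begin with the cardinality statement, since it drives the rest. Given distinct points $x, y \in Y(\ms U)$, the separation hypothesis supplies an $F \in \mc F$ with $x \in F$ and $y \notin F$; the two biconditionals built into membership in $Y(\ms U)$ then give $x \in F \Longleftrightarrow F \cap Y \in \ms U \Longleftrightarrow y \in F$, contradicting the choice of $F$. Hausdorffness is now immediate from (\ref{Fclopen1}): for distinct $x, y \in X$, pick $F \in \mc F$ separating them; then $F$ and $X \w F$ are disjoint clopen neighborhoods. Total disconnectedness follows by the same observation: if $C \subseteq X$ contains distinct points $x \neq y$ and $F$ separates them, then $C = (C \cap F) \cup (C \cap (X \w F))$ is a partition of $C$ into two nonempty relatively clopen pieces, so $C$ is disconnected; hence the only connected subsets are singletons.

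There is no real obstacle here --- the whole argument rests on the single trick of testing the defining biconditional at the very set $F$ which does the separating. The only subtlety worth flagging is the asymmetric check in (\ref{Fclopen1}): $\mc F$-stability is tested only against members of $\mc F$, not their complements, so the closedness of $F$ and of $X \w F$ each demands its own (parallel) verification, even though both ultimately use the same test set $F \in \mc F$.
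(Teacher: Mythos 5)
Your proposal is correct and follows essentially the same route as the paper: both reduce (1) to showing each $F\in\mc F$ is clopen by testing the defining biconditional at $F$ itself (using $F\in\ms U$ for closedness of $F$ and $\emptyset\notin\ms U$ for closedness of $X\setminus F$), and both derive the cardinality bound in (2) by applying a separating $F$ to two alleged distinct points of $Y(\ms U)$. The only cosmetic difference is that the paper phrases the contradiction in (2) as $\emptyset=(F\cap Y)\cap(Y\setminus F)\in\ms U$, while you chain the two biconditionals directly; these are the same argument.
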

\begin{proof}
Since $\clop(X^{^{\mc F-{\rm ultra}}})$ is a boolean algebra, it is enough to show that $\mc F\subseteq \clop(X^{^{\mc F-{\rm ultra}}})$.
Pick a set $E\in \mc F$. If $\ms U$ is an ultrafilter on $E$ and $x\in E(\ms U)$, then the statement $x\in F \Longleftrightarrow F\cap E\in \ms U$ holds for each $F\in \ms F$, and in particular for $F:=E$. Then, $x\in E$. Thus $E$ is closed in the $\mc F-$ultrafilter topology.

Now let $\ms V$ be an ultrafilter on $Z:=X\w E$ and $x\in Z(\ms V)$. The statement $x\in E\Longleftrightarrow E\cap Z\in \ms V$ holds and thus $x\in Z$. Then, $E$ is clopen. Thus (1) is  proved.

(2) The fact that $X^{^{\mc F-{\rm ultra}}}$ is an Hausdorff and totally disconnected space  follows immediately by (1) and the extra assumption on $\mathcal F$.  For the second part of (2), assume, by contradiction, that there exist distinct elements $x,y\in Y(\ms U)$, and pick, by hypothesis, a set $F\in\mc F$, such that $x\in F$ and $y\notin F$. Thus, $\emptyset=(F\cap Y)\cap (Y\w F)\in \ms U$.
\end{proof}
\begin{prop}
Let $X$ be a set, $\mc F$ be a nonempty collection of subsets of $X$, $\emptyset\neq Y\subseteq X$ and $\ms U$ an ultrafilter on $Y$. Then, for each topology on $X$ for which $\mc F$ is a collection of clopen sets,  $Y(\ms U)$ is closed. In particular, $Y(\ms U)$ is closed in the $\mc F-$ultrafilter topology.
\end{prop}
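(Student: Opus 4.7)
The plan is to show that the complement of $Y(\ms U)$ is open in any topology $\tau$ on $X$ for which every member of $\mc F$ is $\tau$-clopen. I fix such a topology and pick an arbitrary $z \in X \setminus Y(\ms U)$; it then suffices to exhibit a $\tau$-open neighborhood of $z$ that is disjoint from $Y(\ms U)$.

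Unwinding the definition of $Y(\ms U)$, the fact that $z \notin Y(\ms U)$ means that the biconditional ``$z \in F \iff F \cap Y \in \ms U$'' fails for at least one $F \in \mc F$, and the argument splits into two cases according to which implication fails. In the first case, $z \in F$ but $F \cap Y \notin \ms U$: I would take $F$ itself as the desired neighborhood, using that $F$ is $\tau$-open, and observe that for every $x \in Y(\ms U)$ the biconditional applied to $F$ forces $x \notin F$, whence $F \cap Y(\ms U) = \emptyset$. In the second case, $z \notin F$ but $F \cap Y \in \ms U$: I would take $X \setminus F$, which is $\tau$-open because $F$ is $\tau$-closed, and again the biconditional forces $x \in F$ for every $x \in Y(\ms U)$, so $Y(\ms U) \subseteq F$ and $(X \setminus F) \cap Y(\ms U) = \emptyset$.

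For the ``in particular'' statement, I would invoke Proposition \ref{Fclopen}(\ref{Fclopen1}), which already guarantees $\mc F \subseteq \clop(X^{^{\mc F-{\rm ultra}}})$; hence the $\mc F$-ultrafilter topology is one of the topologies to which the first assertion applies, giving the conclusion.

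I do not anticipate any real obstacle: the proof is a direct unwinding of the definitions. The one point worth highlighting is that the clopen hypothesis is used symmetrically, with openness of $F$ handling one case and openness of $X \setminus F$ handling the other; weakening the assumption to merely ``open'' or merely ``closed'' would break precisely one of the two cases, which is what makes ``clopen'' the right hypothesis.
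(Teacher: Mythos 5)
Your proof is correct and is essentially the paper's own argument read in contrapositive form: the paper takes a point $x_0\in \ad(Y(\ms U))$ and verifies the defining biconditional by using $E$ (when $x_0\in E$) or $X\setminus E$ (when $x_0\notin E$) as an open neighborhood meeting $Y(\ms U)$, whereas you separate a point of the complement by exactly the same clopen sets; the two case splits and the symmetric use of clopenness coincide. The ``in particular'' step, via Proposition \ref{Fclopen}(\ref{Fclopen1}), is also the one the paper uses.
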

\begin{proof}
Let $x_0\in \ad(Y(\ms U))$ and $E\in\mc F$. If $x_0\in E$, then $E$ is an open neighborhood of $x_0$, by assumption, and thus there exists an element $y_1\in Y(\ms U)\cap E$. By definition, it follows that, for each $F\in\mc F$, $y_1\in F \Longleftrightarrow F\cap Y\in\ms U$, and thus $E\cap Y\in \ms U$, in particular. Conversely, assume $x_0\notin E$. Then, $X\w E$ is an open neighborhood of $x_0$, and thus there exists $y_2\in Y(\ms U)\w E$. Hence, we have $E\cap Y\notin \ms U$. This proves that $Y(\ms U)$ is closed.

The last part of the statement follows immediately by Proposition \ref{Fclopen}(\ref{Fclopen1}).
\end{proof}
\begin{prop}\label{closure}
Let $X$ be a set and $\mc F$ be a nonempty collection of subsets of $X$. Then, for each subspace $Y$ of $X^{^{\mc F-{\rm ultra}}}$, we have
$$
\ad(Y)=\bigcup\{Y(\ms U): \ms U\in \beta Y\}
$$
\end{prop}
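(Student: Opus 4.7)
The plan is to prove the two inclusions separately: $\supseteq$ comes almost for free from Lemma \ref{technical}, while $\subseteq$ requires constructing an ultrafilter on $Y$ via a finite-intersection-property argument.

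For the inclusion $\supseteq$, let $\ms U \in \beta Y$ and $x \in Y(\ms U)$. Since $\ad(Y)$ is the intersection of all $\mc F$-stable-under-ultrafilters sets containing $Y$, it suffices to fix any closed $C \supseteq Y$ and show $x \in C$. Form the induced ultrafilter $\ms U^C := \{Z \subseteq C : Z \cap Y \in \ms U\}$ on $C$, as in Remark \ref{beginning}(\ref{su}). Lemma \ref{technical} gives $Y(\ms U) = C(\ms U^C)$, and since $C$ is $\mc F$-stable under ultrafilters, $C(\ms U^C) \subseteq C$. Thus $x \in C$, completing this inclusion.

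For the inclusion $\subseteq$, I would construct, for each $x \in \ad(Y)$, an ultrafilter $\ms U \in \beta Y$ with $x \in Y(\ms U)$. Consider
$$
\mc C := \{F \cap Y : F \in \mc F,\, x \in F\} \cup \{Y \w F : F \in \mc F,\, x \notin F\}.
$$
The crux is that $\mc C$ has the finite intersection property. A finite intersection from $\mc C$ takes the form $V \cap Y$, where $V = F_1 \cap \dots \cap F_n \cap (X \w G_1) \cap \dots \cap (X \w G_m)$ with $x \in F_i$ and $x \notin G_j$. Such a $V$ lies in ${\rm Bool}(\mc F)$, which by Proposition \ref{Fclopen}(\ref{Fclopen1}) is a collection of clopen sets in $X^{^{\mc F-{\rm ultra}}}$; in particular $V$ is an open neighborhood of $x$, and since $x \in \ad(Y)$ we get $V \cap Y \neq \emptyset$. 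By Remark \ref{beginning}(\ref{zornultra}), there exists an ultrafilter $\ms U$ on $Y$ with $\mc C \subseteq \ms U$. Then for each $F \in \mc F$: if $x \in F$ then $F \cap Y \in \mc C \subseteq \ms U$, and if $x \notin F$ then $Y \w F \in \ms U$, which forbids $F \cap Y \in \ms U$. Hence $x \in Y(\ms U)$.

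The only potentially delicate step is the FIP verification, and it reduces entirely to the observation that every Boolean combination of members of $\mc F$ is clopen in the $\mc F$-ultrafilter topology, which is exactly Proposition \ref{Fclopen}(\ref{Fclopen1}). Trivial edge cases cause no issue: if $Y = \emptyset$ both sides are empty, and points $x \in Y$ are detected by the trivial ultrafilter $\beta^x_Y$ as in Example \ref{gioco}(1).
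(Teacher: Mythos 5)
Your proof is correct and follows essentially the same route as the paper: the inclusion $\supseteq$ via Lemma \ref{technical} applied to a closed set containing $Y$, and the inclusion $\subseteq$ by building a filter base with the finite intersection property from the trace on $Y$ of clopen neighborhoods of $x$, extending it to an ultrafilter via Remark \ref{beginning}(\ref{zornultra}), and checking $x\in Y(\ms U)$ using Proposition \ref{Fclopen}(\ref{Fclopen1}). The only cosmetic difference is that the paper seeds the ultrafilter with \emph{all} sets $\Omega\cap Y$ for $\Omega$ an open neighborhood of $x$ (making the FIP check immediate), whereas you use only the sets coming from members of $\mc F$; both work.
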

\begin{proof}
Let $Y\subseteq X$, $\ms U\in \beta Y$,  $x\in Y(\ms U)$, and $\Omega$ be an open neighborhood of $x$. If $Y\cap \Omega=\emptyset$, then $Y\subseteq Z:=X\w\Omega$ and, using Lemma \ref{technical}, we have $Y(\ms U)=Z(\ms U^Z)\subseteq Z$, since $Z$ is $\mc F-$stable under ultrafilters. Thus we get a contradiction, since $x\in \Omega$. The inclusion $\supseteq$ follows. Conversely, pick an element $x\in \ad(Y)$, and set
$$
\mc G:=\{\Omega\cap Y:\Omega \mbox{ open neighborhood of } x\}.
$$
It is clear that $\mc G$ is a collection of subsets of $Y$ with the finite intersection property (since $x\in \ad(Y))$, and thus (by Remark \ref{beginning}(iii)) there exists an ultrafilter $\ms U^*$ on $Y$ containing $\mc G$. The conclusion will follow if we show that $x\in Y(\ms U^*)$. Fix $F\in\mc F$. If $x\in F$, then $F$ is an open neighborhood of $x$, by Proposition \ref{Fclopen}(\ref{Fclopen1}), and thus $F\cap Y\in \mc G\subseteq \ms U^*$. Conversely, assume $F\cap Y\in \ms U^*$. If $x\notin F$, then $X\w F$ is an open neighborhood of $x$, again by Proposition \ref{Fclopen}, and thus $(X\w F)\cap Y\in \mc G\subseteq \ms U^*$. It follows that $\emptyset\in \ms U^*$, a contradiction. 
\end{proof}
\begin{thm}\label{compactness}
Let $X$ be a set and $\mc F$ be a nonempty collection of subsets of $X$. Then, the following conditions are equivalent.
\begin{enumerate}[\rm(i)]
\item $X^{^{\mc F-{\rm ultra}}}$ is a compact topological space.
\item $X(\ms U)\neq \emptyset$, for each ultrafilter $\ms U$ on $X$.
\item If $\mc H$ is a subcollection of $\mc G:=\mc F\cup\mc F^-$ with the finite intersection property, then $\bigcap \mc H\neq \emptyset$.
\end{enumerate}
\end{thm}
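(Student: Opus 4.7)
The plan is to establish the cyclic chain (i) $\Rightarrow$ (iii) $\Rightarrow$ (ii) $\Rightarrow$ (i). The first implication is essentially immediate: Proposition \ref{Fclopen}(\ref{Fclopen1}) tells us that every element of $\mc F$ and of $\mc F^-$ is clopen (hence closed) in $X^{^{\mc F-{\rm ultra}}}$, so compactness combined with the classical FIP characterization of compact spaces yields (iii) at once.

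For (iii) $\Rightarrow$ (ii), given an ultrafilter $\ms U$ on $X$, I would consider the subfamily $\mc H_{\ms U} := (\mc F\cup\mc F^-)\cap \ms U$. Since $\ms U$ is a filter, $\mc H_{\ms U}$ inherits the finite intersection property, so (iii) produces a point $x\in\bigcap\mc H_{\ms U}$. Using the ultrafilter property (Remark \ref{beginning}(\ref{ultracaratt})), for each $F\in\mc F$ either $F\in\ms U$ (whence $F\in\mc H_{\ms U}$ and $x\in F$) or $X\w F\in\ms U$ (whence $X\w F\in\mc H_{\ms U}$ and $x\notin F$); in either case $x\in F\iff F\in\ms U$, which is precisely the condition $x\in X(\ms U)$.

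For (ii) $\Rightarrow$ (i), I take a family $\mc C$ of closed sets in $X^{^{\mc F-{\rm ultra}}}$ with the finite intersection property and use Remark \ref{beginning}(\ref{zornultra}) to extend $\mc C$ to an ultrafilter $\ms U$ on $X$. By (ii) there exists $x\in X(\ms U)$. For each $C\in\mc C\subseteq\ms U$, Lemma \ref{technical} applied with $Y=Z=X$ and $T=C$ gives $X(\ms U)=C(\ms U_C)$, where $\ms U_C$ is the restricted ultrafilter on $C$ of Remark \ref{beginning}(\ref{giu}); since $C$ is $\mc F-$stable under ultrafilters, $C(\ms U_C)\subseteq C$, and therefore $x\in C$. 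Hence $x\in\bigcap\mc C$, establishing compactness. The only slightly delicate step is this last one, where one must bridge the global ultrafilter $\ms U$ on $X$ with its localizations $\ms U_C$ on the individual members of $\mc C$; Lemma \ref{technical} is exactly the technical device built to make this localization transparent, and without it the extension-to-an-ultrafilter trick would not directly connect to $\mc F$-stability.
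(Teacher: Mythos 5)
Your proof is correct and follows essentially the same route as the paper's: (i)$\Rightarrow$(iii) via Proposition \ref{Fclopen}(\ref{Fclopen1}) and the FIP characterization of compactness, and (ii)$\Rightarrow$(i) via extending $\mc C$ to an ultrafilter and localizing with Lemma \ref{technical}. The only difference is cosmetic: for (iii)$\Rightarrow$(ii) you argue directly with the family $(\mc F\cup\mc F^-)\cap\ms U$, whereas the paper argues by contradiction with an equivalent family of witnesses; the underlying idea is identical.
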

\begin{proof}
(i)$\Longrightarrow$(iii). It is enough to use Proposition \ref{Fclopen}(\ref{Fclopen1}) and compactness of $X^{\mc F-{\rm ultra}}$. 

(iii)$\Longrightarrow$(ii). Let $\ms U$ be an ultrafilter on $X$. Assume, by contradiction, that $X(\ms U)=\emptyset$. This means that, for each $x\in X$ there exists a set $F_x\in\mc F$ such that exactly one of the following statements is true
\begin{enumerate}[(a)]
\item $x\in F_x$ and $F_x\notin \ms U$.
\item $x\notin F_x$ and $F_x\in \ms U$.
\end{enumerate}
Now, for each $x\in X$, set $C_x:=X\w F_x$, if $x\in F_x$, and $C_x:=F_x$, if $x\notin F_x$. Then, it is clear that $\mc H:=\{C_x:x\in X\}$ is a subcollection of $\mc G$ and that it has the finite intersection property, since $\mc H\subseteq \ms U$. Thus, by assumption, there exists $x_0\in \bigcap \mc H$. This is a contradiction, since $x\in X\w C_x$, for each $x\in X$.

(ii)$\Longrightarrow$(i). Let $\mc C$ be a collection of closed subsets of $X^{\mc F-{\rm ultra}}$ with the finite intersection property. By Remark \ref{beginning}(iii), there exists an ultrafilter $\ms U^*$ on $X$ such that $\mc C\subseteq \ms U^*$. By assumption, we can pick a point $x^*\in X(\ms U^*)$. Now, let $C\in \mc C$. Since $C\in \ms U^*$, we have $x^*\in X(\ms U^*)=C(\ms {U^*}_C)\subseteq C$, keeping in mind Lemma \ref{technical}. Thus $x^*\in \bigcap\mc C$. This completes the proof. 
\end{proof}
\begin{ex}\label{ultra-comp-spec}
Let $A$ be a ring. By Example \ref{gioco}(\ref{giocospec}), Remark \ref{esempi1}(\ref{exspec}) and Theorem \ref{compactness} we get immediately the well known fact that the ultrafilter topology on $\spec(A)$ is compact.
\end{ex}
\begin{prop}\label{coarsest}
Let $X$ be a set and $\mc F$ a nonempty collection of subsets of $X$ such that, for each couple of distinct points $x,y\in X$, there exists a set $F\in \mc F$ such that $x\in F$ and $y\notin F$. If $X^{^{\mc F-{\rm ultra}}}$ is a compact topological space, then the $\mc F-$ultrafilter topology is the coarsest topology for which $\mc F$ is a collection of clopen sets.
\end{prop}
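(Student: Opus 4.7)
The plan is to show that if $\tau$ is any topology on $X$ for which every member of $\mc F$ is clopen, then $\tau$ is finer than the $\mc F$-ultrafilter topology $\sigma$; since Proposition \ref{Fclopen}(\ref{Fclopen1}) already puts $\sigma$ itself in this class of topologies, this will make $\sigma$ the coarsest member. Equivalently, I need to show that every $\sigma$-closed set $C$ is $\tau$-closed.

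Fix such a $C$ and a point $x\in X\setminus C$. The key observation is that $C$ is $\sigma$-compact, because $(X,\sigma)$ is compact by hypothesis and $C$ is $\sigma$-closed. Now I use the separation assumption pointwise: for each $c\in C$, since $c\neq x$, there exists $F_c\in\mc F$ with $c\in F_c$ and $x\notin F_c$. Each $F_c$ is $\sigma$-open by Proposition \ref{Fclopen}(\ref{Fclopen1}), so $\{F_c:c\in C\}$ is a $\sigma$-open cover of $C$. Compactness extracts a finite subcover $F_{c_1},\ldots,F_{c_n}$, so that
$$
C\subseteq F_{c_1}\cup\cdots\cup F_{c_n}, \qquad x\notin F_{c_1}\cup\cdots\cup F_{c_n}.
$$

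Now switch perspective to $\tau$: by the hypothesis on $\tau$, each $F_{c_i}$ is $\tau$-closed, hence so is their finite union, and therefore $U:=X\setminus(F_{c_1}\cup\cdots\cup F_{c_n})$ is a $\tau$-open neighborhood of $x$ disjoint from $C$. This shows $x\notin\ad_\tau(C)$; as $x\in X\setminus C$ was arbitrary, $C$ is $\tau$-closed, so $\sigma\subseteq\tau$.

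I do not anticipate a real obstacle: the standard device of covering a compact set by neighborhoods separating a given external point goes through cleanly because each $F\in\mc F$ is simultaneously $\sigma$-open (giving the compactness step) and $\tau$-closed (giving the $\tau$-neighborhood of $x$). The symmetry in the separation hypothesis (which yields, for each $c\neq x$, an $F\in\mc F$ containing $c$ but not $x$) is what lets the argument run; the Hausdorff and total-disconnectedness conclusions of Proposition \ref{Fclopen}(\ref{FclopenHausdorff}) are not directly invoked but are implicitly responsible for making $\sigma$ behave well.
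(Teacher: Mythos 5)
Your proof is correct and is in substance the same as the paper's: the paper notes that the identity map from the compact space $X^{^{\mc F-{\rm ultra}}}$ to the Hausdorff space carrying the coarsest topology making $\mc F$ clopen is a continuous bijection, hence a homeomorphism, while you simply inline the standard proof of that fact (cover the compact closed set $C$ by sets of $\mc F$ missing $x$, extract a finite subcover, and use that these sets are closed in the other topology). The only difference is presentational: you prove the comparison against an arbitrary topology $\tau$ making $\mc F$ clopen rather than against the coarsest one, which is equivalent.
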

\begin{proof} 
Denote by $X_\star$ the set $X$ with the coarsest topology for which $\mc F$ is a collection of clopen sets. Then, the identity map $\mathbf 1:X^{^{\mc F-{\rm ultra}}}\longrightarrow X_\star$ is continuous, by Proposition \ref{Fclopen}(\ref{Fclopen1}). Moreover, it is clear that $X_\star$ is an Hausdorff space, by assumption. Then, $\mathbf 1$ is an homeomorphism.
\end{proof}
\begin{cor}{\rm (\cite[Theorem 8]{folo})}\label{P-ultra=cons}
Preserve the notation of Remark \ref{esempi1}(\ref{exspec}). Then, the $\mathcal P-$ultrafilter topology and the constructible topology of $\spec(A)$ are the same. 
\end{cor}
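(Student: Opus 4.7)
The plan is to deduce the corollary directly from Proposition \ref{coarsest} applied to $X=\spec(A)$ with $\mathcal F = \mathcal P$, the collection of principal open subsets. Recall that, as recounted in the introduction, the constructible topology on $\spec(A)$ is \emph{by definition} the coarsest topology for which every element of $\mathcal P$ is clopen. Thus, once the hypotheses of Proposition \ref{coarsest} are verified for $(\spec(A),\mathcal P)$, the corollary is immediate.

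There are two things to check. First, the compactness hypothesis: the $\mathcal P$-ultrafilter topology on $\spec(A)$ is compact. This is exactly the content of Example \ref{ultra-comp-spec}, which in turn follows from Theorem \ref{compactness} together with Example \ref{gioco}(\ref{giocospec}), where the set $Y_{\mathcal P}(\ms U)$ is identified with the singleton $\{\f p_{\ms U}\}$ and hence is never empty. Second, the separation hypothesis: for any two distinct prime ideals $\f p\neq \f q$ of $A$, there is a principal open set $D_a\in\mathcal P$ containing exactly one of them. This is a triviality: pick $a\in A$ lying in the symmetric difference of $\f p$ and $\f q$, and, after possibly swapping the roles of $\f p$ and $\f q$, one has $a\notin \f p$, $a\in\f q$, i.e., $\f p\in D_a$ and $\f q\notin D_a$.

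With both hypotheses in hand, Proposition \ref{coarsest} applies and yields that the $\mathcal P$-ultrafilter topology coincides with the coarsest topology on $\spec(A)$ for which $\mathcal P$ consists of clopen sets, which is precisely the constructible topology. I do not expect any real obstacle in writing this out: the corollary is essentially a book-keeping step collecting Example \ref{gioco}(\ref{giocospec}), Remark \ref{esempi1}(\ref{exspec}), Theorem \ref{compactness}, and Proposition \ref{coarsest}. The only point that might be worth spelling out explicitly, for readers less familiar with the Zariski topology, is the separation property of $\mathcal P$, but this is a one-line verification.
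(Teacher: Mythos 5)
Your proposal is correct and follows exactly the paper's own route: the paper's proof is the one-line ``Apply Remark \ref{esempi1}(\ref{exspec}) and Proposition \ref{coarsest},'' and you have simply spelled out the two hypotheses of Proposition \ref{coarsest} (compactness via Example \ref{gioco}(\ref{giocospec}) and Theorem \ref{compactness}, and the separation property of $\mathcal P$). No differences of substance.
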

\begin{proof}
Apply Remark \ref{esempi1}(\ref{exspec}) and Proposition \ref{coarsest}.
\end{proof}
\begin{prop}\label{functorial}
Let $X,Y$ be sets, $\mathcal F$ (resp. $\mathcal G$) be a nonempty collection of subsets of $X$ (resp. $Y$). If $f:X\longrightarrow Y$ is a function such that $\{f^{-1}(G):G\in\mathcal G\}\subseteq \mathcal F$, then $f:X^{^{\mc F-{\rm ultra}}}\longrightarrow Y^{^{\mc G-{\rm ultra}}}$ is a continuous function.
\end{prop}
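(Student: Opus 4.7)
The plan is to verify continuity via the closed-set formulation: show that for every closed subset $C$ of $Y^{^{\mc G-{\rm ultra}}}$ (equivalently, every $\mc G$-stable under ultrafilters subset of $Y$), the preimage $D:=f^{-1}(C)$ is $\mc F$-stable under ultrafilters in $X$. So I fix an arbitrary ultrafilter $\ms U$ on $D$ and an element $x\in D(\ms U)$, and the goal is to deduce $x\in D$, i.e.\ $f(x)\in C$.

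The natural device is to transport $\ms U$ to an ultrafilter on $C$. Since $f(D)\subseteq C$, the restriction $g:=f|_D:D\to C$ is a well-defined function and, by Remark \ref{beginning}(\ref{su}), the pushforward
$$
\ms V:=\ms U^{g}=\{T\subseteq C: g^{-1}(T)\in\ms U\}
$$
is an ultrafilter on $C$. Once I show that $f(x)\in C(\ms V)$, the assumption that $C$ is $\mc G$-stable under ultrafilters will give $f(x)\in C$, hence $x\in D$, as desired.

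The core step is therefore to verify, for every $G\in\mc G$, the equivalence $f(x)\in G\iff G\cap C\in\ms V$. Here I would use the two observations: (a) $f^{-1}(G)\in\mc F$ by hypothesis, so $x\in D(\ms U)$ yields $x\in f^{-1}(G)\iff f^{-1}(G)\cap D\in\ms U$; and (b) since $D=f^{-1}(C)$, the straightforward set-theoretic identity
$$
g^{-1}(G\cap C)=f^{-1}(G)\cap f^{-1}(C)\cap D=f^{-1}(G)\cap D
$$
shows that $G\cap C\in \ms V$ is equivalent to $f^{-1}(G)\cap D\in\ms U$. Chaining (a) and (b) with the tautology $f(x)\in G\iff x\in f^{-1}(G)$ gives the required biconditional, completing the proof.

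I do not foresee a serious obstacle: the only slightly subtle point is recognising that the pushforward of $\ms U$ should be taken along $g=f|_D$ rather than along $f$ itself, so that one lands in $\beta C$ and can invoke the $\mc G$-stability of $C$; the key identity $g^{-1}(G\cap C)=f^{-1}(G)\cap D$ makes everything else routine.
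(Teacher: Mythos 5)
Your proposal is correct and follows essentially the same route as the paper's own proof: reduce to showing $f^{-1}(C)$ is $\mc F$-stable for $C$ a $\mc G$-stable set, push the ultrafilter forward along the restriction $g=f|_{f^{-1}(C)}$ to get an ultrafilter on $C$, and verify $f(x)\in C(\ms V)$ via the identity $g^{-1}(G\cap C)=f^{-1}(G)\cap f^{-1}(C)$ together with the hypothesis $f^{-1}(G)\in\mc F$. No gaps.
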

\begin{proof}
Let $C$ be a closed subset of $Y$, set $\Gamma:=f^{-1}(C)$, and let $\ms U$ be an ultrafilter on $\Gamma$. Then, it sufficies to show that $\Gamma(\ms U)\subseteq \Gamma$. Let $g:f^{-1}(C)\longrightarrow C$  be the restriction of $f$ to $f^{-1}(C)$. Now, note that the collection of sets 
$\ms V:=\{D\subseteq C:f^{-1}(D)\in \ms U\}$ is an ultrafilter on $C$, since $\ms V=\ms U^g$ (see Remark \ref{beginning}(v)). Now, take an element $x\in \Gamma(\ms U)$, and fix a set $G\in\mathcal G$. If $f(x)\in G$, then  $x\in f^{-1}(G)\in \mathcal F$ (by assumption), and thus 
$
f^{-1}(G\cap C)=f^{-1}(G)\cap \Gamma\in \ms U,
$
since $x\in\Gamma(\ms U)$. This proves that, if $f(x)\in G$, then $G\cap C\in \ms V$. Conversely, if $G\cap C\in \ms V$, then $f^{-1}(G)\cap \Gamma\in\ms U$ and, since $f^{-1}(G)\in \ms U$ and $x\in \Gamma(\ms U)$, it follows $f(x)\in G$. This argument shows that $f(x)\in C(\ms V)$ and, since $C$ is closed, we have $f(x)\in C$. Then, the inclusion $\Gamma(\ms U)\subseteq \Gamma$ follows, and  the statement is now clear. 
\end{proof}
\section{Applications}
An interesting case is when $\mc F$ is a basis of a topology on $X$. 
\begin{prop}\label{B-ultra}
Let $X$ be a topological space, $\mc T$ the topology and $\mc B$  a basis of open sets of $X$. Then, the following statements hold.
\begin{enumerate}[\rm (1)]
\item The $\mc B-$ultrafilter topology on $X$ is finer than or equal to the given topology $\mathcal T$.
\item If $X$ satisfies the $T_0$ axiom, then the $\mc B-$ultrafilter topology is Hausdorff and totally disconnected. In particular, if $X$ satisfies the $T_0$ axiom but it is not Hausdorff, then the  $\mathcal B-$ultrafilter topology is strictly finer than the given topology $\mc T$.
\item Assume that $X$ satisfies the $T_0$-axiom and $X^{^{\mc B-{\rm ultra}}}$ is compact. 
\begin{enumerate}[\rm (a)]
\item Then the $\mc B-$ultrafilter topology is the coarsest topology on  $X$  for which $\mc B$ is a collection of clopen sets.
\item  Moreover $X$, equipped with the topology $\mc T$, is a spectral space.
\end{enumerate}
\end{enumerate}
\end{prop}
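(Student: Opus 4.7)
The plan is to treat the three items sequentially, ultimately reducing part (3)(b) to Hochster's axiomatic characterization of spectral spaces.

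For (1), by Proposition \ref{Fclopen}(\ref{Fclopen1}) every element of $\mc B$ is clopen, hence open, in the $\mc B-$ultrafilter topology; since $\mc B$ is a basis of $\mc T$, any $\mc T$-open set is a union of members of $\mc B$ and is therefore open in the $\mc B-$ultrafilter topology. For (2), the $T_0$ axiom combined with the fact that $\mc B$ is a basis supplies, for each pair of distinct points $x\neq y$, a basic open $B\in\mc B$ containing exactly one of them; Proposition \ref{Fclopen}(\ref{FclopenHausdorff}) then yields immediately that $X^{^{\mc B-{\rm ultra}}}$ is Hausdorff and totally disconnected. The final sentence of (2) is formal: a Hausdorff topology comparable to a non-Hausdorff one must be strictly finer. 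For (3)(a), the separation of points by $\mc B$ obtained in (2) together with the compactness hypothesis permits a direct application of Proposition \ref{coarsest}.

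The core of the proposition is (3)(b), for which I would check Hochster's conditions: that $(X,\mc T)$ is $T_0$, compact, sober, and has a basis of compact open subsets stable under finite intersections. $T_0$ is given. Compactness of $(X,\mc T)$ is inherited from the compactness of the finer topology $X^{^{\mc B-{\rm ultra}}}$, since any $\mc T$-open cover is a fortiori a cover in the finer topology. The family $\mc B_\sharp$ of finite intersections of members of $\mc B$ is a basis of $\mc T$ that is closed under finite intersections, and by Proposition \ref{Fclopen}(\ref{Fclopen1}) each of its members is $\mc B-$ultrafilter-clopen, hence compact in the compact space $X^{^{\mc B-{\rm ultra}}}$, and therefore $\mc T$-compact.

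The main obstacle, as expected, is sobriety. Fix a $\mc T$-irreducible closed subset $C\subseteq X$ and consider $\mc S:=\{B\cap C:B\in\mc B,\ B\cap C\neq\emptyset\}$; irreducibility of $C$ gives the finite intersection property for $\mc S$, so by Remark \ref{beginning}(\ref{zornultra}) it extends to an ultrafilter $\ms U$ on $C$. Since $C$ is $\mc T$-closed, it is also closed in the finer $\mc B-$ultrafilter topology, hence $\mc B-$stable under ultrafilters. Theorem \ref{compactness} applied to the extension $\ms U^X$, together with Lemma \ref{technical}, gives $C(\ms U)=X(\ms U^X)\neq\emptyset$, while Proposition \ref{Fclopen}(\ref{FclopenHausdorff}) forces $C(\ms U)$ to be a singleton $\{x\}\subseteq C$. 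To close the argument I would verify $C=\ad(\{x\})$ with respect to $\mc T$: the inclusion $\ad(\{x\})\subseteq C$ is automatic, while the reverse is obtained by contradiction, since any $y\in C\setminus\ad(\{x\})$ would lie in some basic open $B\in\mc B$ missing $x$, forcing $B\cap C\in\ms U$ (as $B\cap C\neq\emptyset$) while simultaneously $x\notin B$ contradicts $x\in C(\ms U)$. Uniqueness of the generic point is then immediate from $T_0$.
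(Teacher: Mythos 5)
Your argument is correct; parts (1), (2) and (3)(a) coincide with the paper's (immediate from Propositions \ref{Fclopen} and \ref{coarsest}, noting that the $T_0$ axiom plus the basis property separates any unordered pair of points by a member of $\mc B$). For (3)(b), however, you take a genuinely different route. The paper first shows that every $B\in\mc B$ is $\mc T$-compact (being closed in the compact space $X^{^{\mc B-{\rm ultra}}}$), then proves that the $\mc B$-ultrafilter topology coincides with the patch topology induced by $\mc T$ (Proposition \ref{patch-referee}), and concludes by citing Hochster's Corollary to Proposition 7, which hides the sobriety verification inside the cited result; the identification with the patch topology is also a byproduct of independent interest. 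You instead verify Hochster's axiomatic characterization directly: compactness of $(X,\mc T)$ is inherited from the finer compact topology, $\mc B_\sharp$ is a basis of $\mc T$-compact opens closed under finite intersections (each member lies in ${\rm Bool}(\mc B)\subseteq\clop(X^{^{\mc B-{\rm ultra}}})$, hence is compact there and a fortiori in the coarser $\mc T$), and sobriety is obtained by a hands-on ultrafilter argument: extending the filter base $\{B\cap C: B\in\mc B,\ B\cap C\neq\emptyset\}$ on an irreducible closed set $C$ to an ultrafilter $\ms U$, using Theorem \ref{compactness} and Lemma \ref{technical} to get $C(\ms U)\neq\emptyset$, Proposition \ref{Fclopen}(\ref{FclopenHausdorff}) to get a singleton $\{x\}\subseteq C$, and the defining property of $C(\ms U)$ to show $x$ is generic. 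All steps check out. Your approach is more self-contained (it uses only the characterization of spectral spaces quoted in the introduction) and showcases the ultrafilter machinery once more in the sobriety step, at the cost of being longer and of not recording the patch--ultrafilter identification that the paper's proof establishes along the way.
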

\begin{proof}
(1) and (2) are immediate consequences of Proposition \ref{Fclopen}. Statement (3,a) follows by applying Proposition \ref{coarsest}. 
By using  statement (3,a), it follows that $\mc S:=\mc B\cup \{X\w B:B\in\mc B\}$ is a subbasis of open sets for $X^{^{\mc B-\rm ultra}}$. Moreover, by statement (1), each member of $\mc B$ is compact, with respect to the topology $\mc T$, since it is closed (=compact) in $X^{^{\mc B-\rm ultra}}$. Now, let $X^{^{\rm patch}}$ denote the set $X$ endowed with the so-called \emph{patch topology induced by} $\mc T$, i.e. the topology whose subbasis of open sets is the set $\mc S_0$ of all the open and compact subsets of $X$, with respect to the topology $\mc T$, and their complements in $X$. The following result will be crucial for the last part of the proof. 
\begin{prop}\label{patch-referee}
Preserve the notation and the assumptions of Proposition \ref{B-ultra}(3). Then the patch topology induced by $\mc T$ is equal to the $\mc B-$ultrafilter topology. 
\end{prop}
\begin{proof}
It follows immediately that the patch topology induced by $\mc T$ is finer than or equal to the $\mc B-$ultrafilter topology (in fact $\mc S\subseteq \mc S_0$). Now, let $S_0\in\mc S_0$. Since $\mc B$ is a basis of open and compact subspaces of $X$ (with respect to $\mc T$), then there exists a finite subcollection $\mc C\subseteq \mc B$ such that $S_0=\bigcup\mc C$, or $S_0=\bigcap\mc C^-$. Thus $S_0$ is an open set of the $\mc B-$ultrafilter topology. This proves that the patch topology induced by $\mc T$ and the $\mc B-$ultrafilter topology are identical.
\end{proof}
Then, the fact that $X$, endowed with the topology $\mc T$, is a spectral space follows by applying \cite[Corollary to Proposition 7]{ho}. 
\end{proof}
\begin{cor}\label{spectralcriterion}
Let $X$ be a topological space. Then, the following conditions are equivalent.
\begin{enumerate}[\rm (i)]
\item $X$ is a spectral space.
\item There is a basis $\mc B$ of $X$ such that $X^{^{\mc B-\rm ultra}}$ is a compact and Hausdorff topological space.
\item $X$ satisfies the T$_0$-axiom and there is a basis $\mc B$ of $X$ such that $X_{\mc B}(\ms U)\neq \emptyset$, for any ultrafilter $\ms U$ on $X$.
\item $X$ satisfies the T$_0$-axiom and there is a subbasis $\mathcal S$ of $X$ such that $X_{\mc S}(\ms U)\neq \emptyset$, for any ultrafilter $\ms U$ on $X$.
\end{enumerate}
\end{cor}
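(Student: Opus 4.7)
The plan is to close the cycle (i) $\Rightarrow$ (ii) $\Rightarrow$ (iii) $\Rightarrow$ (iv) $\Rightarrow$ (i), drawing on the machinery of Sections 1--3 together with the classical fact (Hochster) that the patch topology on a spectral space is compact and Hausdorff.

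For (i) $\Rightarrow$ (ii), I would take $\mc B$ to be the basis of compact open subsets of the spectral space $X$ (which is closed under finite intersections). To see that $X^{^{\mc B-\rm ultra}}$ is compact, I invoke Theorem \ref{compactness}(iii)$\Rightarrow$(i): any subcollection of $\mc B\cup\mc B^-$ with the finite intersection property consists of patch-clopen sets, so it has nonempty intersection by compactness of the patch topology. Hausdorffness of $X^{^{\mc B-\rm ultra}}$ then follows from Proposition \ref{Fclopen}(\ref{FclopenHausdorff}), since $X$ is T$_0$ and $\mc B$ is a basis, which together force any two distinct points to be separated by a member of $\mc B$.

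For (ii) $\Rightarrow$ (iii), the nonemptiness of $X(\ms U)$ for every ultrafilter $\ms U$ on $X$ is the direct translation of compactness via Theorem \ref{compactness}. The T$_0$ axiom for the original topology $\mc T$ follows from the Hausdorffness of $X^{^{\mc B-\rm ultra}}$: any two distinct points are separated by some basic open set in the ultrafilter topology, which reduces to separation by some $B\in \mc B\subseteq \mc T$. The implication (iii) $\Rightarrow$ (iv) is immediate, as every basis is a subbasis.

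For (iv) $\Rightarrow$ (i), let $\mc B:=\mc S_\sharp$ be the collection of finite intersections of members of $\mc S$; then $\mc B$ is a basis of $\mc T$. By Proposition \ref{prebool}, the $\mc S-$ and $\mc B-$ultrafilter topologies coincide, so $X_{\mc B}(\ms U)=X_{\mc S}(\ms U)\neq\emptyset$ for every ultrafilter $\ms U$ on $X$. Theorem \ref{compactness} then yields that $X^{^{\mc B-\rm ultra}}$ is compact, and Proposition \ref{B-ultra}(3,b) concludes that $(X,\mc T)$ is spectral. The main obstacle lies in (i) $\Rightarrow$ (ii), where one must bridge from the classical compactness of the patch topology (an external input) to the ultrafilter formulation via Theorem \ref{compactness}; the remaining implications are largely formal unpackings of definitions once this bridge is in place.
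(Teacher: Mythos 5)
Your proof is correct in substance, but it follows a genuinely different route from the paper. The paper closes the cycle as (i)$\Rightarrow$(iii)$\Rightarrow$(ii)$\Rightarrow$(i), together with (iii)$\Leftrightarrow$(iv): for (i)$\Rightarrow$(iii) it identifies $X$ with $\spec(A)$ and invokes Example \ref{gioco}(\ref{giocospec}), where the ultrafilter limit prime $\f p_{\ms U}$ of \cite{calota} gives $X_{\mc P}(\ms U)=\{\f p_{\ms U}\}\neq\emptyset$ outright; (iii)$\Rightarrow$(ii) is then Theorem \ref{compactness} plus Proposition \ref{B-ultra}(2), and (ii)$\Rightarrow$(i) is exactly Proposition \ref{B-ultra}(3). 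Your orientation (i)$\Rightarrow$(ii)$\Rightarrow$(iii)$\Rightarrow$(iv)$\Rightarrow$(i) instead imports Hochster's compactness of the patch topology as an external input in order to verify condition (iii) of Theorem \ref{compactness}; this is valid, but it is less in the spirit of the paper, which deliberately \emph{re-derives} patch compactness from the ultrafilter machinery (Example \ref{ultra-comp-spec}, Corollary \ref{P-ultra=cons}) rather than assuming it, whereas the paper's own bridge from spectrality to the ultrafilter world is the purely algebraic fact that $\f p_{\ms U}$ is a prime ideal. The one step you should tighten is the T$_0$ claim in your (ii)$\Rightarrow$(iii): the phrase ``separated by some basic open set in the ultrafilter topology, which reduces to separation by some $B\in\mc B$'' is not literally available, since a priori the $\mc B$-ultrafilter topology is described only by its closed sets, and you do not yet know that $\mc B\cup\mc B^-$ is a subbasis for it --- that is Proposition \ref{coarsest}, whose hypothesis is precisely the separation property you are trying to establish. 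A clean fix: Hausdorff implies T$_1$, so by Proposition \ref{closure} applied to a singleton, $\{x\}=\ad(\{x\})=\{z\in X:\forall B\in\mc B,\ z\in B\Longleftrightarrow x\in B\}$; hence any $z\neq x$ is distinguished from $x$ by some $B\in\mc B\subseteq\mc T$, which gives T$_0$ for the original topology. Your (iv)$\Rightarrow$(i) via $\mc S_\sharp$ and Proposition \ref{prebool} is fine and matches how the paper handles (iii)$\Leftrightarrow$(iv).
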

\begin{proof}
(i)$\Longrightarrow$(iii) We can assume, without loss of generality, that $X=\spec(A)$, for some ring $A$. Let $\ms U$ be an ultrafilter on $X$ and $\mc P$ be the basis of $X$ made up of the principal open subsets. Keeping in mind Remark \ref{gioco}(\ref{giocospec}), we have $X_{\mc P}(\ms U)=\{\f p_{\ms U}\}$. Thus, it sufficies to choose $\mc B:=\mc P$.

(iii)$\Longrightarrow$(ii). Apply Theorem \ref{compactness} and Proposition \ref{B-ultra}(2) to the basis $\mc B$ given in condition (iii).

(ii)$\Longrightarrow$(i). It is the statement of Proposition \ref{B-ultra}(3).

(iii)$\Longleftrightarrow$(iv). It is trivial, by Proposition \ref{prebool} and Theorem \ref{compactness}.
\end{proof}

The following example will show that for fixed a topological space $X$ the $\mathcal B-$ultrafilter topology depends on the choice of the basis $\mc  B$. 
\begin{ex}
Let $K$ be a field, $\{T_n:n\in\mathbf N\}$ be an infinite and countable collection of indeterminates over $K$ and consider the ring $A:=K[\{T_n:n\in\mathbf N\}]$. Set $X:=\spec(A)$ and endow this set with the Zariski topology. As usual, let $\mc P:=\{D_f:f\in A\}$ be the basis of the principal open subsets of $\spec(A)$, and let $\mc T:=\{D(\f a):=X\w V(\f a): \f a \mbox{ ideal of }A\}$ (clearly, $\mc T$ is a basis of $X$, being it the topology). We claim that the $\mc P-$ultrafilter topology (i.e. the usual ultrafilter topology on  $X$) and the $\mc T-$ultrafilter topology are different. Let $\f m$ be the maximal ideal of $A$ generated by the set $\{T_n:n\in\mathbf N\}$ and set $\mc F:=\{V(T_n):n\in \mathbf N\}\cup\{X\w \{\f m\}\}$. It is straightforward that $\mc F$ is a collection of subsets of $X$ with the finite intersection property, and thus there exists an ultrafilter $\ms U$ on $X$ containing $\mc F$, by virtue of Remark \ref{beginning}(iii). We claim that the set 
$$
X_{\mc T}(\ms U):=\{\f p\in X:\mbox{for each ideal }\f a\mbox{ of }A ,( \f p\in D(\f a)\Longleftrightarrow D(\f a)\in \ms U)\}
$$
is empty. If not, let $\f p\in X_{\mc T}(\ms U)$. Since $\mc F\subseteq \ms U$, it follows that $V(T_n)\in \ms U$, for each $n\in\mathbf N$, and thus $T_n\in \f p$, for each $n\in\mathbf N$ (by the definition of $X_{\mc T}(\ms U)$). This proves that $\f p=\f m$. On the other hand, if we set $\f a:=\f m$, we have obviously $\f m\notin D(\f a)$, hence $D(\f a)\notin \ms U$, that is, $V(\f a)=\{\f m\}\in \ms U$. It follows $\emptyset\in\ms U$, since $X\setminus \{\f m\}\in \mc F\subseteq \ms U$, a contradiction. This argument proves that $X_{\mc T}(\ms U)$ is empty, and thus $X^{^{\mc T-{\rm ultra}}}$ is not compact, by Theorem \ref{compactness}. It follows that the $\mc T-$ultrafilter topology and the $\mc P-$ultrafilter topology on $X$ are not the same, since the $\mc P-$ultrafilter topology is compact.                                             
\end{ex}\label{basis}
\begin{prop}\label{overrings}
Let $B|A$ be a ring extension and ${\rm S}(B|A)$ be the set of all the rings $C$ such that $A\subseteq C\subseteq B$. For each $x\in B$ set 
$$
U_x:=\{C\in {\rm S}(B|A):x\in C\}
$$
Let ${\rm S}(B|A)$ be endowed with the Zariski topology, i.e. the topology for which the collection $\mc R:=\{U_x:x\in B\}$ is a subbasis of open sets.  
The following statements hold.
\begin{enumerate}[\rm (1)]
\item If $Y$ is a subset of ${\rm S}(B|A)$ and $\ms U$ is an ultrafilter on $Y$, then $A_{\ms U}:=\{x\in B:U_x\cap Y\in \ms U\}$ belongs to ${\rm S}(B|A)$. 
\item ${\rm S}(B|A)$ is a spectral space, endowed with the Zariski topology. 
\end{enumerate}
\end{prop}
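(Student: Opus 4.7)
The plan is to apply Corollary \ref{spectralcriterion}(iv) using the given subbasis $\mathcal{R}$. The only real content lies in verifying part (1); once that is in hand, part (2) reduces to an almost tautological verification plus a short check that $\mathrm{S}(B|A)$ is $T_0$.

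For part (1), I would show that $A_{\ms U}$ is a subring of $B$ containing $A$ by pulling the ring operations back through the defining formula. Observe the inclusions
\[
U_x\cap U_y\subseteq U_{x+y}\cap U_{xy}\cap U_{-x},\qquad U_x=\mathrm{S}(B|A) \text{ for every }x\in A,
\]
which are immediate from the fact that each $C\in\mathrm{S}(B|A)$ is itself a ring containing $A$. Given $x,y\in A_{\ms U}$ one has $U_x\cap Y, U_y\cap Y\in\ms U$, hence $(U_x\cap U_y)\cap Y\in\ms U$ since $\ms U$ is a filter; by the inclusions above, $U_{x+y}\cap Y$, $U_{xy}\cap Y$ and $U_{-x}\cap Y$ all belong to $\ms U$, so $x+y,xy,-x\in A_{\ms U}$. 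For any $a\in A$, $U_a\cap Y=Y\in\ms U$, so $A\subseteq A_{\ms U}$. Thus $A_{\ms U}\in\mathrm{S}(B|A)$.

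For part (2), I would first check the $T_0$ axiom: if $C\neq C'$ in $\mathrm{S}(B|A)$, pick $x\in C\triangle C'$; then $U_x$ separates $C$ and $C'$. Next, fix an ultrafilter $\ms U$ on $X:=\mathrm{S}(B|A)$. Part (1), applied with $Y=X$, produces $A_{\ms U}\in X$. For every $x\in B$,
\[
A_{\ms U}\in U_x\iff x\in A_{\ms U}\iff U_x\cap X=U_x\in\ms U,
\]
which is precisely the condition for $A_{\ms U}\in X_{\mathcal R}(\ms U)$. Hence $X_{\mathcal R}(\ms U)\neq\emptyset$ for every ultrafilter $\ms U$ on $X$, so condition (iv) of Corollary \ref{spectralcriterion} is satisfied with the subbasis $\mathcal R$, and $\mathrm{S}(B|A)$ is spectral.

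The main obstacle is the ring-theoretic verification in part (1); the Zariski $T_0$ check and the reduction to the spectral criterion are essentially formal. The key technical point, repeated in each closure check, is that the finite intersection property of an ultrafilter together with the set-theoretic inclusions $U_x\cap U_y\subseteq U_{x+y}, U_{xy}$ transports the ring structure of the members of $\mathrm{S}(B|A)$ to the ``limit ring'' $A_{\ms U}$.
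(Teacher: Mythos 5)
Your proof is correct and follows essentially the same route as the paper: establish that $A_{\mathscr U}$ is a ring in ${\rm S}(B|A)$ (the paper delegates this to the argument of \cite{calota}, which is exactly the filter-plus-inclusions computation you write out), observe that $X_{\mathcal R}(\mathscr U)$ contains $A_{\mathscr U}$ for every ultrafilter $\mathscr U$ on $X$, and invoke Corollary \ref{spectralcriterion}. Your explicit $T_0$ check via $U_x$ for $x$ in the symmetric difference is a detail the paper leaves implicit (it is what makes $X_{\mathcal R}(\mathscr U)$ a singleton), so including it is a welcome, not a divergent, addition.
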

\begin{proof}
(1). Let $\ms U$ be an ultrafilter on $Y$. The fact that $A_{\ms U}$ is a ring is proved by using the same argument given in  \cite{calota}. Moreover, for each $x\in A$, $U_x\cap Y=Y$ and thus it belongs to $\ms U$. Now, the inclusion $A\subseteq A_{\ms U}$ is clear.

(2). Let $\ms U$ be an ultrafilter on $X:={\rm S}(B|A)$. Keeping in mind (1) and  definitions, it is straightforward to verify that $X_{\mc R}(\ms U)=\{A_{\ms U}\}$. Then the conclusion follows immediately by Corollary \ref{spectralcriterion}.
\end{proof}
\begin{prop}\label{icoverrings}
Let $B|A$ be a ring extension and ${\rm I}(B|A)$ be the set of all the rings $C\in {\rm S}(B|A)$ such that $C$ is integrally closed in $B$. As usual, define on ${\rm I}(B|A)$ the Zariski topology by taking as subbasis of open sets the collection 
$$
\mc R':=\{U_x\cap {\rm I}(B|A):x\in C\}
$$
(i.e. this is the subspace topology, induced by the Zariski topology on ${\rm S}(B|A)$). Then the following statements hold.
\begin{enumerate}[\rm (1)]
\item If $Y$ is a subset of ${\rm I}(B|A)$ and $\ms U$ is an ultrafilter on $Y$, then $A_{\ms U}:=\{x\in C:Y\cap U_x\in \ms U\}$ belongs to ${\rm I}(B|A)$. 
\item ${\rm I}(B|A)$ is a spectral space, endowed with the Zariski topology. 
\end{enumerate}
\end{prop}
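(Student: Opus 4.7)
The plan is to mirror the proof of Proposition \ref{overrings}, incorporating one additional ingredient: the stability of integral closedness in $B$ under the ultrafilter construction. (I read the ``$x\in C$'' appearing in the statement as ``$x\in B$''.)

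For part (1), Proposition \ref{overrings}(1) already ensures that $A_{\ms U}$ is a subring of $B$ containing $A$, so it belongs to ${\rm S}(B|A)$. The essential new step is checking that $A_{\ms U}$ is integrally closed in $B$. I would take $x\in B$ satisfying a monic equation $x^n+a_{n-1}x^{n-1}+\cdots+a_0=0$ with all $a_i\in A_{\ms U}$. By definition each $U_{a_i}\cap Y$ lies in $\ms U$, hence so does the finite intersection $T:=\bigcap_{i=0}^{n-1}(U_{a_i}\cap Y)$. For every $C\in T$, all coefficients $a_i$ belong to $C$, so $x$ is integral over $C$; since $C\in Y\subseteq {\rm I}(B|A)$, $C$ is integrally closed in $B$, whence $x\in C$. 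This shows $T\subseteq U_x\cap Y$, so $U_x\cap Y\in \ms U$, i.e.~$x\in A_{\ms U}$.

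For part (2), I would invoke Corollary \ref{spectralcriterion}(iv) with the subbasis $\mc R'$. The $T_0$ axiom is clear: if $C_1\neq C_2$ are elements of ${\rm I}(B|A)$, after possibly swapping we can pick $x\in C_1\w C_2$, and then $U_x\cap {\rm I}(B|A)$ contains $C_1$ but not $C_2$. Now fix an ultrafilter $\ms U$ on $X:={\rm I}(B|A)$. By part (1), $A_{\ms U}$ belongs to $X$. A direct unraveling of the definitions, in the style of Example \ref{gioco}(\ref{giocozar}) and of the proof of Proposition \ref{overrings}(2), yields $A_{\ms U}\in X_{\mc R'}(\ms U)$: indeed for any $x\in B$, one has $A_{\ms U}\in U_x\cap X$ if and only if $x\in A_{\ms U}$, if and only if $U_x\cap X\in \ms U$. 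Thus $X_{\mc R'}(\ms U)\neq\emptyset$, and the spectral criterion yields the conclusion.

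The only genuinely new content beyond Proposition \ref{overrings} is the integrality argument in part (1); the rest is a direct repetition together with an invocation of Corollary \ref{spectralcriterion}. The key conceptual observation is that a monic integral relation involves only finitely many coefficients, which is precisely the finite-intersection-property situation in which ultrafilters behave well, so integral closedness transfers from the members of $Y$ to $A_{\ms U}$.
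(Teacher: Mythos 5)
Your proposal is correct and follows essentially the same route as the paper: part (1) is the identical integrality argument (the finitely many coefficients give a finite intersection of $U_{a_i}\cap Y$ lying in $\ms U$, contained in $U_x\cap Y$), and part (2) verifies $X_{\mc R'}(\ms U)=\{A_{\ms U}\}\neq\emptyset$ and applies Corollary \ref{spectralcriterion}, exactly as the paper does (you merely spell out the $T_0$ check and the membership verification that the paper leaves as ``immediately verified''). Your reading of the typo ``$x\in C$'' as ``$x\in B$'' is also the intended one.
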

\begin{proof}
(1). Proposition \ref{overrings}(1) implies  $A_{\ms U}\in {\rm S}(B|A)$. Now, let $x\in B$ be an element integral over $A_{\ms U}$. Pick elements  $a_0,\z,a_{n-1}\in A_{\ms U}$  such that 
$$
x^n+a_{n-1}x^{n-1}+\z+a_1x+a_0=0.
$$
If $C\in Y\cap \bigcap_{i=0}^{n-1}U_{a_i}$, then $x$ is integral over $C$. Since $Y\subseteq {\rm I}(B|A)$, it follows $x\in C$. This argument shows that $Y\cap \bigcap_{i=0}^{n-1}U_{a_i}\subseteq Y\cap U_x$. Keeping in mind that $a_0,\z,a_{n-1}\in A_{\ms U}$, it follows $Y\cap \bigcap_{i=0}^{n-1}U_{a_i}\in\ms U$, and thus $Y\cap U_x\in \ms U$. Then $x\in A_{\ms U}$, and this proves that $A_{\ms U}$ is integrally  closed in $B$. 

(2) Set $X:={\rm I}(B|A)$. As in Proposition \ref{overrings}(2), it is immediately verified that $X_{\mc R'}(\ms U)=\{A_{\ms U}\}$, for any ultrafilter $\ms U$ on $X$. Then, it suffices to apply Corollary 3.2.
\end{proof}

\begin{prop}\label{comp-gen-clo}
Let $X$ be a topological space  and $\mathcal F$ be a collection of subsets of $X$ containing at least a basis of open sets of $X$. If $Y$ is a compact subspace of $X$, then the generic closure of $Y$ (with respect to the given topology) is closed, with respect to the $\mc F-$ultrafilter topology.  
\end{prop}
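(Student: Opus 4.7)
The plan is to write $Z$ for the generic closure of $Y$, i.e.\ $Z:=\{x\in X : \ad(\{x\})\cap Y\neq\emptyset\}$, and to show directly that $Z$ is $\mc F$-stable under ultrafilters. So fix an ultrafilter $\ms U$ on $Z$ and an element $x\in Z(\ms U)$; the goal is to produce a point $y_0\in Y$ with $y_0\in\ad(\{x\})$, which will force $x\in Z$.

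First, using the Axiom of Choice, pick for each $z\in Z$ a point $f(z)\in\ad(\{z\})\cap Y$. This gives a function $f:Z\to Y$. By Remark \ref{beginning}(\ref{su}), the pushforward $\ms V:=\ms U^f=\{T\subseteq Y : f^{-1}(T)\in\ms U\}$ is an ultrafilter on $Y$. Since $Y$ is compact with respect to the topology $\mc T$ restricted to $Y$, every ultrafilter on $Y$ converges to at least one point; let $y_0\in Y$ be such a limit of $\ms V$. Concretely this means that, for every open set $W$ of $X$ with $y_0\in W$, we have $W\cap Y\in\ms V$.

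Now let $\mc B\subseteq\mc F$ be the given basis of $X$, and fix any $B\in\mc B$ with $y_0\in B$. From $W:=B$ we get $B\cap Y\in\ms V$, i.e.\ $f^{-1}(B\cap Y)\in\ms U$. The crucial observation is that $f^{-1}(B\cap Y)\subseteq B\cap Z$: indeed, if $f(z)\in B$ then, since $f(z)\in\ad(\{z\})$ and $B$ is open, the neighborhood $B$ of $f(z)$ must contain $z$. Hence $B\cap Z\in\ms U$, and because $B\in\mc F$ and $x\in Z(\ms U)$, the defining property gives $x\in B$. As $B$ was an arbitrary basic open neighborhood of $y_0$, we conclude that every open neighborhood of $y_0$ contains $x$, i.e.\ $y_0\in\ad(\{x\})$. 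Therefore $\ad(\{x\})\cap Y\ni y_0$ and $x\in Z$.

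The main obstacle is really just arranging the pushforward correctly and invoking compactness of $Y$ for an honest limit of the ultrafilter $\ms V$; once one has the point $y_0$, the inclusion $f^{-1}(B\cap Y)\subseteq B\cap Z$ together with the hypothesis that $\mc F$ contains a basis of $\mc T$ converts the $\mc F$-ultrafilter condition defining $Z(\ms U)$ into the pointwise statement $y_0\in\ad(\{x\})$. This yields $Z(\ms U)\subseteq Z$ for every $\ms U\in\beta Z$, which is exactly what is required.
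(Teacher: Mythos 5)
Your proof is correct, but it follows a genuinely different route from the paper's. The paper argues by contradiction and uses the open-cover form of compactness directly: given $x_0\in Z(\ms U)\setminus Z$, it covers $Y$ by basic open sets $\Omega_y\in\mc B$ avoiding $x_0$, extracts a finite subcover, observes that $Z$ itself lies in the union of that finite subcover (by the same generization property of open sets you use), and then invokes Remark \ref{beginning}(\ref{ultracaratt}) to place some $\Omega_{y_i}\cap Z$ in $\ms U$, forcing the contradiction $x_0\in\Omega_{y_i}$. You instead argue directly: you choose a specialization map $f:Z\to Y$, push $\ms U$ forward to an ultrafilter $\ms V$ on $Y$ via Remark \ref{beginning}(\ref{su}), and invoke the characterization of (quasi-)compactness by convergence of ultrafilters to produce a limit $y_0\in Y$, which you then show is a specialization of $x$. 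Both arguments hinge on the same two facts — that open sets are closed under generization and that $\mc F$ contains a basis, so the condition $B\cap Z\in\ms U$ can be converted into $x\in B$ — but your version yields slightly more information (an explicit point $y_0\in\ad(\{x\})\cap Y$ realized as an ultrafilter limit), at the cost of importing one fact not stated in the paper, namely that every ultrafilter on a quasi-compact space converges; that fact is standard and holds without any separation axiom, but its usual proof is essentially the finite-subcover/finite-intersection argument that the paper runs directly. The paper's proof is therefore more self-contained relative to its own preliminaries, while yours is more structural and avoids the proof by contradiction.
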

\begin{proof}
Let us denote by $Y_0$ the generic closure of $Y$ and assume, by contradiction, that $Y_0$ is not a closed subset of $X^{\mc F-{\rm ultra}}$. By definition, there exist an ultrafilter $\ms U$ on $Y_0$ and a point $x_0\in Y_0(\ms U)\w Y_0$. Let $\mc B$ be a basis of $X$ contained in $\mc F$. 
By the definition of the set $Y_0$, for each  $y\in Y$ there exists an open neighborhood $\Omega_y$ of $y$ such that $x_0\notin\Omega_y$. Without loss of generality, we can assume that $\Omega_y\in \mc B$. By compactness, the open cover $\{\Omega_y:y\in Y\}$ of $Y$ admits a finite subcover, say $\{\Omega_{y_i}:i=1,\z,n\}$. Now, by definition, it is immediately verified that $Y_0\subseteq \bigcup\{\Omega_{y_i}:i=1,\z,n\}$, that is $Y_0=\bigcup\{\Omega_{y_i}\cap Y_0:i=1,\z,n\}$, and thus, by Remark \ref{beginning}(2), it follows that $\Omega_{y_i}\cap Y_0\in \ms U$, for some $i\in\{1,\z,n\}$. Keeping in mind that, by assumption, $\Omega_{y_i}\in \mathcal F$ and that $x_0\in Y_0(\ms U)$, it follows $x_0\in \Omega_{y_i}$, a contradiction. 
\end{proof}
Finally, we will show that the so called constructible topology of a quasi-separated scheme (see \cite[pag. 337, (7.2.11)]{EGA}) can be presented as a particular case of the construction given in Section 2. 
\begin{thm}\label{scheme-constructible}
Let $X$ be the underlying topological space of a quasi-separated scheme, and let $\mathcal K$ be the collection of all the open and compact subspaces of $X$. Then, $\mathcal K$ is a basis of $X$ and the constructible topology on $X$ is equal to the $\mathcal K-$ultrafilter topology. 
\end{thm}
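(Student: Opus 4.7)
The plan splits along the two assertions. For the first, $\mc K$ is a basis because any scheme has a basis of affine opens and every affine open is quasi-compact; quasi-separatedness further ensures that $\mc K$ is closed under finite intersections, which will be useful below. For the second assertion, write $\tau_c$ for the constructible topology of \cite{EGA} (which, for a quasi-separated scheme, is by (7.2.11) the coarsest topology making every $K\in\mc K$ both open and closed) and $\tau_u$ for the $\mc K-$ultrafilter topology. The inclusion $\tau_c\subseteq \tau_u$ is immediate from Proposition \ref{Fclopen}(\ref{Fclopen1}), which says $\mc K\subseteq \clop(X^{^{\mc K-\rm ultra}})$, so $\tau_u$ already realises $\mc K$ as a collection of clopen sets.

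For the reverse inclusion I would argue by reduction to affines. Fix an affine open cover $\{U_i\}_{i\in I}$ of $X$; each $U_i$ lies in $\mc K$ and is therefore clopen in both $\tau_c$ and $\tau_u$, so to show $\tau_u\subseteq \tau_c$ it suffices to establish $\tau_u|_{U_i}\subseteq \tau_c|_{U_i}$ for every $i$. The crucial intermediate step is to identify the trace topology $\tau_u|_{U_i}$ with the $\mc K_{U_i}-$ultrafilter topology on $U_i$, where $\mc K_{U_i}$ denotes the collection of quasi-compact open subsets of $U_i$. Here quasi-separatedness enters twice: for any $K\in\mc K$ one has $K\cap U_i\in \mc K_{U_i}$, being the intersection of two quasi-compact opens in a quasi-separated scheme, and any $K'\in \mc K_{U_i}$ is already a quasi-compact open subset of $X$ contained in $U_i$, hence belongs to $\mc K$. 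Combining these identifications with the trivial equality $K\cap Y=(K\cap U_i)\cap Y$ for $Y\subseteq U_i$, a direct check against Definition \ref{stable} shows that a subset $Y\subseteq U_i$ is $\mc K-$stable under ultrafilters in $X$ if and only if it is $\mc K_{U_i}-$stable in $U_i$. Since each $U_i$ is the spectrum of a ring, Example \ref{opencompact} together with Corollary \ref{P-ultra=cons} then identifies the $\mc K_{U_i}-$ultrafilter topology on $U_i$ with the constructible topology of $U_i$, which is exactly $\tau_c|_{U_i}$.

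The main obstacle I expect is precisely this localisation step: showing that the $\mc K-$ultrafilter topology is compatible with restriction to the affine piece $U_i$. One cannot simply invoke the global arguments of Section 2, such as Proposition \ref{coarsest} or Proposition \ref{patch-referee}, because $X$ itself need not be quasi-compact and $\tau_u$ need not be compact. Quasi-separatedness is exactly what allows one to pass freely between $\mc K$ and $\mc K_{U_i}$ in the ultrafilter-stability verification; once this local identification is in place, gluing along the cover yields the desired equality $\tau_u=\tau_c$.
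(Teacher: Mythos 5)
Your argument is correct, and it localises differently from the paper's proof. The paper never restricts the topologies themselves to the affine pieces: it fixes a cover $\mathcal A$ by spectral opens, replaces $\mathcal K$ by $\mathcal B=\bigcup_U\mathcal B_U$ via Corollary \ref{bool}, invokes \cite[Proposition (7.2.3)(iv)]{EGA} to characterise the constructibly closed sets of $X$ as those $Y$ with every trace $Y\cap U$ constructibly closed in $U$, and then proves directly that an \emph{arbitrary} $Y\subseteq X$ is $\mathcal B$-stable iff each $Y\cap U$ is $\mathcal B_U$-stable; this forces the ultrafilter push/pull machinery of Lemma \ref{technical} and Remark \ref{beginning}(iv),(v), since an ultrafilter on $Y$ must be restricted to $Y\cap U$ and conversely extended. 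You instead only ever consider $Y\subseteq U_i$, where the identity $Y_{(X,\mathcal K)}(\ms U)=Y_{(U_i,\mathcal K_{U_i})}(\ms U)$ is an elementary check (note that you do need the small observation that $Y_{(X,\mathcal K)}(\ms U)\subseteq U_i$, which follows by testing against $U_i\in\mathcal K$ and using $U_i\cap Y=Y\in\ms U$), and you recover the global statement by gluing the subspace topologies along a cover that is clopen for both topologies. What your route buys is the elimination of the ultrafilter transfer arguments; what it costs is that you must justify that $\tau_c|_{U_i}$ is the constructible topology of the scheme $U_i$, which is the same EGA input the paper uses, just consumed in a different form. One caveat: your opening claim that for a general quasi-separated (not necessarily quasi-compact) scheme $\tau_c$ is \emph{by definition} the coarsest topology making $\mathcal K$ clopen is not literally what \cite[(7.2.11)]{EGA} says (constructible sets there are built from retrocompact opens, and $X$ itself is always retrocompact); fortunately this inclusion is redundant in your write-up, since the local identification plus gluing already yields both containments.
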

\begin{proof}
Let $\mathcal A$ be any open cover of $X$ that consists only of spectral subspaces of $X$ (such an open cover exists, since $X$ is the underlying topological space of a scheme). For any $U\in\mathcal A$, let $\mathcal B_U$ be the basis of $U$ made up of the open and compact subspaces of $U$, and set $\mathcal B:=\bigcup\{\mathcal B_U:U\in\mathcal A\}$. Then $\mathcal B$ is a basis of $X$, and it is straightforward that $\mathcal B\subseteq \mathcal K\subseteq {\rm Bool}(\mathcal B)$ (in particular, also $\mathcal K$ is a basis of $X$). Thus, in view of Corollary \ref{bool}, the $\mathcal K-$ultrafilter topology and the $\mathcal B-$ultrafilter topology on $X$ are the same. 
 Now, let $Y$ be a subset of $X$. Keeping in mind \cite[Proposition (7.2.3)(iv)]{EGA}, it sufficies to show that the following conditions are equivalent:
\begin{enumerate}[(i)]
\item $Y$ is $\mathcal B-$stable under ultrafilters.
\item For any $U\in\mathcal A$, $Y\cap U$ is closed, with respect to the constructible topology of $U$.
\item[(ii)'] For any $U\in\mathcal A$, $Y\cap U$ is a $\mathcal B_U-$stable under ultrafilters subset of $U$.
\end{enumerate}
Since the equivalence of (ii) and (ii)' follows immediately by Corollary \ref{P-ultra=cons},  it is enough to show that (i) is equivalent to (ii)'. 

(ii)'$\Longrightarrow$(i). Let $\ms U$ be an ultrafilter on $Y$ and let $x_0\in Y_{(X,\mathcal B)}(\ms U)$. We want to show that $x_0\in Y$. Pick a spectral subspace $U\in \mathcal A$ containing $x_0$, set $T:=Y\cap U$, and consider the ultrafilter $\ms U_T:=\{S\cap T:S\in\ms U\}$ on $T$ (see Remark \ref{beginning}(\ref{giu})). Keeping in mind that $T$ is a $\mathcal B_U-$stable under ultrafilter subset of $U$ and applying Lemma \ref{technical} and Remark \ref{esempi1}(\ref{finer}), we have
$$
x_0\in Y_{(X,\mathcal B)}(\ms U)\cap U\subseteq T_{(X,\mathcal B_U)}(\ms U_T)\cap U=:T_{(U,\mathcal B_U)}(\ms U_T)\subseteq T
$$
and thus $x_0\in Y$. 

(i) $\Longrightarrow$ (ii)'. Suppose that $Y$ is $\mathcal B-$stable under ultrafilters, fix a spectral subspace $U\in \mathcal A$, and let $\ms U$ be an ultrafilter on $Z:=U\cap Y$. We want to show that $Z_{(U,\mathcal B_U)}(\ms U)\subseteq Z$. Take an element $x_0\in Z_{(U,\mathcal B_U)}(\ms U)$ and consider the ultrafilter $\ms U^Y:=\{S\subseteq Y:S\cap Z\in \ms U\}$ on $Y$ (Remark \ref{beginning}(\ref{su})). Since, by assumption, we have 
$$
Y_{(X,\mathcal B)}(\ms U^Y):=\{x\in X:[\forall B\in \mathcal B, x\in B\Longleftrightarrow B\cap Y\in \ms U^Y]\}\subseteq Y
$$
it sufficies to show that $x_0\in Y_{(X,\mathcal B)}(\ms U^Y)$. Fix an open set $B\in \mathcal B$. If $x_0\in B$, we can pick an open and compact subspace $\Omega\in \mathcal B_U$ of $U$ such that $x_0\in \Omega\subseteq B\cap U$. Since $x_0\in Z_{(U,\mathcal B_U)}(\ms U)$, it follows $(\Omega\cap Y)\cap Z=\Omega \cap Z\in \ms U$, by definition, and thus $\Omega \cap Y\in \ms U^Y$. Moreover $B\cap Y\in\ms U^Y$, since $\Omega\cap Y\subseteq B\cap Y$. Conversely, suppose that $B\cap Y\in \ms U^Y$, i.e. $B\cap Y\cap U\in \ms U$. Since $X$ is the underlying topological space of a quasi-separated scheme, $B\cap U$ is an open and compact subspace of $U$ \cite[Proposition (6.1.12)]{EGA}, that is $B\cap U\in \mathcal B_U$. Thus we have $x_0\in B\cap U$, since $x_0\in Z_{(U,\mathcal B_U)}(\ms U)$. The proof is now complete. 
\end{proof}
\begin{center}
\textsc{Ackowledgements}
\end{center}
The author is grateful to the referee for his/her useful comments which greatly improved the paper. 


\begin{thebibliography}{12}
\bibitem[\bf At--Ma]{AM} M. F. Atiyah and I. G. MacDonald, Introduction to commutative algebra, Addison-Wesley, Reading, 1969.

\bibitem[\bf Ca--Lo--Ta]{calota}
P. J. Cahen, K. A. Loper, and F. Tartarone, Integer-valued polynomials and Pr\"ufer $v-$multiplication domains, \emph{J. Algebra} {\bf 226} (2000), 765--787.

\bibitem[\bf Fi--Fo--Lo  a]{fifolo1} C. A. Finocchiaro, M. Fontana, A. Loper,  Ultrafilter and constructible topologies on spaces of valuation domains, \emph{Comm. Algebra} (to apppear).

\bibitem[\bf Fi--Fo--Lo b]{fifolo2} C. A. Finocchiaro; M. Fontana ; K. A. Loper, The constructible topology on spaces of valuation domains, \emph{Trans. Am. Math. Soc.} (to appear).

\bibitem[\bf Fo--Lo]{folo}  M. Fontana and  K. A. Loper, The patch topology and the ultrafilter topology on the prime spectrum of a commutative ring, \emph{ Comm. Algebra} {\bf 36} (2008), 2917--2922.

\bibitem[\bf Gr--Di]{EGA}
A. Grothendieck and J. Dieudonn\'e,  \'El\'ements de G\'eom\'etrie 
Alg\'ebrique I, Springer, Berlin, 1970.

\bibitem[\bf Ho]{ho} M. Hochster, Prime ideal structure in commutative rings, \emph{ Trans. Amer. Math. Soc.} {\bf 142} (1969),   43--60.

\bibitem[\bf Je]{je} Thomas Jech, Set Theory, Springer, New York, 1997 (First Edition, Academic Press, 1978).

\bibitem[\bf Ol a]{ol} B. Olberding, Noetherian spaces of integrally closed rings with an application to intersections of valuation rings, \emph{Comm. Algebra} {\bf 38} (2010), no. 9, 3318--3332. 

\bibitem[\bf Ol b]{ol1} B. Olberding, Overrings of two-dimensional Noetherian domains representable by Noetherian spaces of valuation rings \emph{J. Pure Appl. Algebra} {\bf 212} (2008), no. 7, 1797--1821.
\end{thebibliography}
\end{document}